\theoremstyle{plain}
\newtheorem{theorem}{Theorem}[section]
\newtheorem{lemma}[theorem]{Lemma}
\newtheorem{proposition}[theorem]{Proposition}
\newtheorem{corollary}[theorem]{Corollary}
\theoremstyle{remark}
\newtheorem{remark}[theorem]{Remark}
\newtheorem*{convention}{Convention}
\theoremstyle{definition}
\numberwithin{equation}{section}
\def\ACVF{\mathrm{ACVF}}
\def\VF{\mathrm{VF}}
\def\RV{\mathrm{RV}}
\def\RES{\mathrm{RES}}
\def\hl{\mathbf h}
\def\fl{\mathrm{\dagger}}
\def\rv{\mathrm{rv}}
\def\val{\mathrm{val}}
\def\res{\mathrm{res}}
\def\Var{\mathrm{Var}}
\def\loc{\mathrm{loc}}
\def\pr{\mathrm{pr}}
\def\Gr{\text{Gr}}
\def\Spf{\text{Spf}}
\def\sing{\text{sing}}
\def\m{\mathfrak m}
\def\ord{\mathrm{ord}}
\def\Jac{\mathrm{Jac}}
\def\alg{\mathrm{alg}}
\def\Spec{\mathrm{Spec}}
\def\Spf{\text{Spf}}
\def\bdd{\mathrm{bdd}}
\def\vol{\mathrm{vol}}
\def\Ob{\mathrm{Ob}}
\def\X{\mathcal{X}}
\def\Y{\mathcal{Y}}
\def\Z{\mathcal{Z}}
\def\ac{\text{ac}}
\def\Lbb{\mathbb{L}}
\def\Gr{\mathrm{Gr}}
\def\gcd{\mathrm{gcd}}
\def\sing{\text{sing}}
\def\Hom{\mathrm{Hom}}
\def\k{\mathbf{k}}
\def\x{\mathbf{x}}
\def\y{\mathbf{y}}
\def\L{\mathbb{L}}
\def\A{\mathbb{A}}
\title[Regular and formal motivic Thom-Sebastiani theorem]{\bf The motivic Thom-Sebastiani theorem for regular and formal functions}  
\author{L\^e Quy Thuong}
\address{Institut de Recherche Math\'ematiques de Rennes (IRMAR), 263 Avenue du General Leclerc CS 74205 35042 Rennes Cedex, France {\rm(current)}}
\email{ thuong.lequy@univ-rennes1.fr}
\address{Department of Mathematics, Vietnam National University, 334 Nguyen Trai Street, Thanh Xuan District, Hanoi, Vietnam}
\email{thuonglq@vnu.edu.vn}
\keywords{definable sets, formal schemes, motivic integration, volume Poincar\'e series, motivic Milnor fiber, analytic Milnor fiber, Thom-Sebastiani theorem, convolution}
\subjclass[2010]{03C60, 14B20, 14E18, 14G22, 32S45, 11S80}
\begin{document}           
\begin{abstract}
Thanks to Hrushovski-Loeser's work on motivic Milnor fibers, we give a model-theoretic proof for the motivic Thom-Sebastiani theorem in the case of regular functions. Moreover, slightly extending of Hrushovski-Loeser's construction adjusted to Sebag, Loeser and Nicaise's motivic integration for formal schemes and rigid varieties, we formulate and prove an analogous result for formal functions. The latter is meaningful as it has been a crucial element of constructing Kontsevich-Soibelman's theory of motivic Donaldson-Thomas invariants.
\end{abstract}
\maketitle                 

\section{Introduction}
Let $f$ and $g$ be holomorphic functions on complex manifolds of dimensions $d_1$ and $d_2$, having isolated singularities at $\x$ and $\y$, respectively. Define $f\oplus g$ by $f\oplus g(x,y)=f(x)+g(y)$. Let $F_{f,\x}$ be the (topological) Milnor fiber of $(f,\x)$, the same for $(g,\y)$ and $(f \oplus g,(\x,\y))$. The original Thom-Sebastiani theorem \cite{ST} states that there exists an isomorphism between the cohomology groups 
$$H^{d_1+d_2-1}(F_{f\oplus g,(\x,\y)},\mathbb Q)\cong H^{d_1-1}(F_{f,\x},\mathbb Q)\otimes H^{d_2-1}(F_{g,\y},\mathbb Q)$$ 
compatible with the monodromies. Steenbrink in \cite{Steen} refined a conjecture on the Thom-Sebastiani theorem for the mixed Hodge structures, which was fulfilled later and independently by Varchenko \cite{V2} and Saito \cite{Sa}. In the letters to A'Campo (1972) and to Illusie (1999), Pierre Deligne discussed the $\ell$-adic version for an arbitrary field (rather than complex numbers), in which he replaced the Milnor fibers by the nearby cycles and used Laumon's construction of convolution product (cf. \cite[D\'efinition 2.7.2]{Lau}); this work recently has been fully realized by Fu \cite{Lei}. Furthermore, Denef-Loeser \cite{DL3} and Looijenga \cite{Loo} also provided proofs of the motivic version for motivic vanishing cycles in the case of fields of characteristic zero, from which the classical results were recovered without the hypothesis that $\x$ and $\y$ are isolated singularities.

We come back to the problem on the motivic Thom-Sebastiani theorem in the framework for the motivic Milnor fibers of {\it formal} functions. It has been likely a formally unsolved problem, but already used in Kontsevich-Soibelman's theory of motivic Donaldson-Thomas invariants for non-commutative Calabi-Yau threefolds (see \cite{KS}). Using Temkin's results on resolution of singularities of an excellent formal scheme \cite{Tem} and Denef-Loeser's formulas for the motivic Milnor fiber of a regular function \cite{DL1, DL4}, Kontsevich and Soibelman introduce in \cite{KS} the motivic Milnor fiber of a formal function. The motivic Thom-Sebastiani theorem for formal functions that concerns this notion is a key to construct the motivic Donaldson-Thomas invariants. In fact, it has the same interpretation as Denef-Loeser's and Looijenga's local version  (cf. \cite{DL3}, \cite{Loo}) and a complete proof for it should be required. This is the main purpose of the present article.

The motivic Milnor fiber of a regular function may be described in terms of resolution of singularity, after the works of Denef-Loeser \cite{DL1, DL4, DL5} and of Guibert-Loeser-Merle \cite{GLM1, GLM2, GLM3}. In particular, Guibert-Loeser-Merle had the refinement when applying this method to further extensions of the motivic Thom-Sebastiani theorem (see \cite{GLM1, GLM2, GLM3}). Recently, with the help of Hrushovski-Kazhdan's motivic integration, Hrushovski and Loeser \cite{HL} even give a more flexible manner to describe the motivic Milnor fiber in terms of the data of the corresponding analytic Milnor fiber (introduced by Nicaise-Sebag \cite{NS}). An important application of this approach is our proof of the integral identity conjecture in \cite{Thuong3}. Also in \cite{Thuong3}, a slight generalization of Hrushovski-Loeser's construction \cite{HL} combined with Nicaise's formula on volume Poincar\'e series \cite{Ni2} allows to interpret in the same way as in \cite{HL} the motivic Milnor fiber of a formal function. However, this method requires the restriction to studying over algebraically closed fields of characteristic zero (hence the hypothesis in the present work).

Our article is organized as follows. In Section \ref{sec2}, we recall some basic and essential backgrounds on the motivic Milnor fiber of a regular function, in which the local form of Denef-Loeser and Looijenga's motivic Thom-Sebastiani theorem is included (Theorem \ref{thom-sebastiani}), using the main references \cite{DL1, DL2, DL3, DL4, DL5} and \cite{Loo}. The local form states that 
$$\mathscr S_{f\oplus g,(\x,\y)}^{\phi}=\mathscr S_{f,\x}^{\phi}* \mathscr S_{g,\y}^{\phi},$$ 
where $\mathscr S_{f,\x}$ is the motivic Milnor fiber of $(f,\x)$, $\mathscr S_{f,\x}^{\phi}:=(-1)^{d_1-1}(\mathscr S_{f,\x}-1)$, the same for $(g,\y)$ and $(f\oplus g,(\x,\y))$, and $*$ is the convolution product (cf. Subsection \ref{TSconv}). Here, one does not need to assume that $\x$ and $\y$ are isolated singular points. Using the tools from \cite{HK} and \cite{HL}, recalled partly here in Section \ref{HLmorphismsssss}, we introduce a new proof for this formula in Section \ref{SSregular}. Notice that the previous formula lives in the monodromic Grothendieck ring $\mathscr M_k^{\hat{\mu}}$, by a technical reason, however, our proof only runs in a localization of $\mathscr M_k^{\hat{\mu}}$. 

In Section \ref{FormalTS}, we mark the highlights and the essences of motivic integration for special formal schemes, following \cite{Se}, \cite{LS}, \cite{NS}, \cite{Ni2} and \cite{Thuong3}. In particular, by \cite{Thuong3}, we show that Kontsevich-Soibelman's motivic Milnor fiber of a formal function and Nicaise's volume Poincar\'e series mention on the same thing and this can be also read off from the corresponding analytic Milnor fiber. Furthermore, we can use the model-theoretic tools recalled in Section \ref{HLmorphismsssss} to describe the volume Poincar\'e series, hence the motivic Milnor fiber of a formal function. The formal version of the motivic Thom-Sebastiani theorem has the same form as the regular one but $f$ and $g$ replaced by formal functions $\mathfrak f$ and $\mathfrak g$, respectively (Theorem \ref{TSformal}). It is proven in Section \ref{CC} using the development of tools in Section \ref{HLmorphismsssss} as well as some analogous techniques in the proof of the regular version in Section \ref{SSregular}. 

{\it Acknowledgement.} The author is grateful to Fran\c cois Loeser, Julien Sebag and Michel Raibaut for their useful discussions. He would like to thank the Centre Henri Lebesgue and the Universit\'e de Rennes 1 for awarding him a postdoctoral fellowship and an excellent atmosphere during his stays there. This work was partially supported par the Centre Henri Lebesgue in the program ``Investissements d'avenir'' --- ANR-11-LABX-0020-01. Earlier part of the manuscript was done at the Institut de Math\'ematiques de Jusieu, Universit\'e Pierre et Marie Curie, under Loeser's guidance, partially supported by ERC under the European Community's Seventh Framework Programme (FP7/2007-2013) / ERC Grant Agreement no. 246903/NMNAG.



\section{Preliminaries}\label{sec2}
Throughout the present article, we always assume that $ k$ is an algebraically closed field of characteristic zero.
\subsection{Grothendieck rings of algebraic varieties}
By definition, an {\it algebraic $k$-variety} is a separated reduced $k$-scheme of finite type. Let $\Var_k$ be the category of algebraic $k$-varieties, its morphisms are morphisms of algebraic $k$-varieties. The {\it Grothendieck group} $K_0(\Var_k)$ is an abelian group generated by symbols $[\mathscr X]$ for objects $\mathscr X$ in $\Var_k$ subject to the relations $[\mathscr X]=[\mathscr Y]$ if $\mathscr X$ and $\mathscr Y$ are isomorphic in $\Var_k$, $[\mathscr X]=[\mathscr Y]+[\mathscr X\setminus \mathscr Y]$ if $\mathscr Y$ is Zariski closed in $\mathscr X$. Moreover, $K_0(\Var_k)$ is also a ring with unit with respect to the cartesian product. Set $\Lbb:=[\mathbb A_k^1]$ and denote by $\mathscr M_k$ the localization of $K_0(\Var_k)$ with respect to the multiplicative system $\{\Lbb^i\mid i\in\mathbb N\}$. 

Let $\mu_m$ (or $\mu_m(k)$) be the group scheme of $m$th roots of unity in $k$. Varying $m\geq 1$ in $\mathbb N$, such schemes give rise to a projective system with respect to morphisms $\mu_{mn}\to\mu_{m}$ given by $\xi\mapsto \xi^n$, and its limit will be denoted by $\hat{\mu}$. A {\it good $\mu_m$-action} on an object $\mathscr X$ of $\Var_k$ is a group action of $\mu_m$ on $\mathscr X$ such that each orbit is contained in an affine $k$-subvariety of $\mathscr X$. A {\it good $\hat{\mu}$-action} on $\mathscr X$ is a $\hat{\mu}$-action which factors through a good $\mu_m$-action for some $m\geq 1$ in $\mathbb N$. 

The {\it $\hat{\mu}$-equivariant Grothendieck group} $K_0^{\hat{\mu}}(\Var_k)$ is an abelian group generated by the iso-equivariant classes of varieties $[\mathscr X,\sigma]$, with $\mathscr X$ an algebraic $k$-variety, $\sigma$ a good $\hat{\mu}$-action on $\mathscr X$, modulo the conditions $[\mathscr X,\sigma]=[\mathscr Y,\sigma|_{\mathscr Y}]+[\mathscr X\setminus \mathscr Y,\sigma|_{\mathscr X\setminus \mathscr Y}]$ if $\mathscr Y$ is Zariski closed in $\mathscr X$ and $[\mathscr X\times\mathbb A_k^n,\sigma]=[\mathscr X\times\mathbb A_k^n,\sigma']$ if $\sigma$, $\sigma'$ lift the same $\hat{\mu}$-action on $\mathscr X$ to an affine action on $\mathscr X\times\mathbb A_k^n$. In the present article we shall write $[\mathscr X,\sigma]$ simply by $[\mathscr X]$ when the $\hat{\mu}$-action $\sigma$ is clear. Similarly as previous, $K_0^{\hat{\mu}}(\Var_k)$ has a natural ring structure due to the cartesian product. Let $\mathscr M_k^{\hat{\mu}}$ denote $K_0^{\hat{\mu}}(\Var_k)[\Lbb^{-1}]$, it is the $\hat{\mu}$-equivariant version of $\mathscr M_k$ above. Let $\mathscr M_{k,\loc}^{\hat{\mu}}$ be the localization of $\mathscr M_k^{\hat{\mu}}$ with respect to the multiplicative family generated by the elements $1-\Lbb^i$, with $i\geq 1$ in $\mathbb N$. We shall also write $\loc$ for the localization morphism $\mathscr M_k^{\hat{\mu}}\to \mathscr M_{k,\loc}^{\hat{\mu}}$.

\subsection{Motivic Milnor fiber}
Let $\mathscr X$ be a pure $d$-dimensional smooth $ k$-variety, $f$ a non-constant regular function on $\mathscr X$, and $\x$ a closed point in the zero locus of $f$. Denote by $\mathscr{X}_{\x,m}$ (or $\mathscr{X}_{\x,m}(f)$) the set of arcs $\varphi(t)$ in $\mathscr X(k[t]/(t^{m+1}))$ originated at $\x$ with $f(\varphi(t))\equiv t^m\mod t^{m+1}$, which is a locally closed subvariety of $k$-variety $\mathscr X(k[t]/(t^{m+1}))$. Since $\mathscr{X}_{\x,m}$ is invariant by the $\hat{\mu}$-action on $\mathscr X(k[t]/(t^{m+1}))$ given by $\xi\cdot\varphi(t)=\varphi(\xi t)$, it defines an iso-equivariant class $[\mathscr{X}_{\x,m}]$ in $\mathscr M_k^{\hat{\mu}}$. The {\it motivic zeta function of $f$ at $\x$} is the formal series
$$Z_{f,\x}(T)=\sum_{m\geq 1}[\mathscr{X}_{\x,m}]\Lbb^{-md}T^m$$
with coefficients in $\mathscr M_k^{\hat{\mu}}$. By Denef-Loeser \cite{DL1}, $Z_{f,\x}(T)$ is a {\it rational} function, i.e., a $\mathscr M_k^{\hat{\mu}}$-linear combination of 1 and products finite (possibly empty) of $\Lbb^aT^b/(1-\Lbb^aT^b)$ with $(a,b)$ in $\mathbb Z\times\mathbb N_{>0}$. Remark that we can take by \cite{DL4} the limit $\lim_{T\to\infty}$ for rational functions such that $\lim_{T\rightarrow\infty}\left(\Lbb^aT^b/(1-\Lbb^aT^b)\right)=-1$. Then the {\it motivic Milnor fiber of $f$ at $x$} is defined as $-\lim_{T\to\infty}Z_{f,\x}(T)$ and denoted by $\mathscr S_{f,\x}$. This is a virtual variety in $\mathscr M_k^{\hat{\mu}}$.

\subsection{The motivic Thom-Sebastiani theorem for regular functions}\label{TSconv}
In this subsection, we restate the motivic Thom-Sebastiani theorem for motivic Milnor fibers.

Let us recall the concept of convolution product from \cite{DL3}, \cite{Loo} and \cite{GLM1}. Consider the Fermat varieties $F_0^m$ and $F_1^m$ in $\mathbb G_{m,k}^2$ defined by the equations $u^m+v^m=0$ and $u^m+v^m=1$, respectively. We endow with the standard $(\mu_m\times\mu_m)$-action on these varieties. If $\mathscr X$ and $\mathscr Y$ are algebraic $k$-varieties with $\mu_m$-action, one defines 
\begin{align*}
[\mathscr X]*[\mathscr Y]=-[F_1^m\times^{\mu_m\times\mu_m}(\mathscr X\times \mathscr Y)]+[F_0^m\times^{\mu_m\times\mu_m}(\mathscr X\times \mathscr Y)],
\end{align*}
where, for $i\in\{0,1\}$, 
$$F_i^m\times^{\mu_m\times\mu_m}(\mathscr X\times \mathscr Y)=F_i^m\times(\mathscr X \times \mathscr Y)/_{\sim}$$ 
with $(au,bv,x,y)\sim (u,v,ax,by)$ for any $a$, $b$ in $\mu_m$. The group scheme $\mu_m$ acts diagonally on $F_i^m\times^{\mu_m\times\mu_m}(\mathscr X\times \mathscr Y)$. Passing to the projective limit that $\mathscr M_k^{\hat{\mu}}$ equals $\underleftarrow{\lim} \mathscr M_k^{\mu_m}$, we get the convolution product $*$ on $\mathscr M_k^{\hat{\mu}}$. This product is commutative and associative (see for example \cite{GLM1}).

Let $f$ and $g$ be regular functions on smooth algebraic $k$-varieties $\mathscr X$ and $\mathscr Y$, respectively. Define $f\oplus g (x,y)=f(x)+g(y)$. For closed points $\x$ in $\mathscr X_0$ and $\y$ in  $\mathscr Y_0$, we set 
$$\mathscr S_{f,\x}^{\phi}=(-1)^{\dim\mathscr X-1}(\mathscr S_{f,\x}-1),\quad \mathscr S_{g,\y}^{\phi}=(-1)^{\dim\mathscr Y-1}(\mathscr S_{g,\y}-1).$$

\begin{theorem}[\cite{DL3}, \cite{Loo}]\label{thom-sebastiani}
The identity $\mathscr S_{f\oplus g,(\x,\y)}^{\phi}=\mathscr S_{f,\x}^{\phi}* \mathscr S_{g,\y}^{\phi}$ holds in $\mathscr M_{ k}^{\hat{\mu}}$.
\end{theorem}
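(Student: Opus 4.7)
The plan is to prove Theorem \ref{thom-sebastiani} by working inside the Hrushovski--Loeser framework recalled in Section \ref{HLmorphismsssss}: each of $\mathscr S_{f,\x}$, $\mathscr S_{g,\y}$, $\mathscr S_{f\oplus g,(\x,\y)}$ is presented (in the localised ring $\mathscr M_{k,\loc}^{\hat{\mu}}$) as the image, under the Hrushovski--Kazhdan integration/retraction morphism, of a canonical class attached to the corresponding analytic Milnor fiber. Concretely, $\mathscr S_{f,\x}$ corresponds to the class of the definable set $X_{f,\x} = \{x\in\VF^{d_1}: x\to\x,\ \val(f(x))=\val(t)\}$, and similarly for $g$ and $f\oplus g$. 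The key advantage of this reformulation is that sums now appear as actual additions in $\VF$ rather than being encoded through resolution data, so the $\oplus$-structure can be analysed by partitioning definable sets by valuation.

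The core step is to slice $X_{f\oplus g,(\x,\y)}$ by comparing $\alpha:=\val(f(x))$ and $\beta:=\val(g(y))$, yielding three pieces $X_{<}$, $X_{=}$, $X_{>}$. On $X_{<}$ the ultrametric inequality forces $\alpha=\val(t)$ and $\beta>\val(t)$: the first cuts out the arcs counted by $X_{f,\x}$, while the $y$-side ``thick disk'' $\{y\to\y:\val(g(y))>\val(t)\}$ is evaluated by a short Presburger summation of the residual motivic zeta function $Z_{g,\y}(T)$ and, after taking $\lim_{T\to\infty}$, contributes $1-\mathscr S_{g,\y}$; hence $h([X_{<}])=\mathscr S_{f,\x}(1-\mathscr S_{g,\y})$, with a symmetric computation giving $h([X_{>}])=(1-\mathscr S_{f,\x})\mathscr S_{g,\y}$. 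On $X_{=}$ one writes $f(x)=u\,t^{\alpha}$, $g(y)=v\,t^{\alpha}$ with units $u,v$ satisfying $\val(u+v)=\val(t)-\alpha$; passage to the residue field at each level $m$ produces precisely the Fermat varieties $F_{1}^{m}$ (and $F_{0}^{m}$) together with their diagonal $\mu_m\times\mu_m$ actions, and summing over $m$ and rationalising the resulting series identifies $h([X_{=}])$ with the convolution $\mathscr S_{f,\x}*\mathscr S_{g,\y}$ in the sense of Subsection \ref{TSconv}. Adding the three contributions gives
$$\mathscr S_{f\oplus g,(\x,\y)} = \mathscr S_{f,\x}(1-\mathscr S_{g,\y}) + (1-\mathscr S_{f,\x})\mathscr S_{g,\y} + \mathscr S_{f,\x}*\mathscr S_{g,\y},$$
which, upon substituting $\mathscr S_{f,\x}=1+(-1)^{d_1-1}\mathscr S_{f,\x}^{\phi}$ and its $g$-analogue and using bilinearity of $*$, rearranges at once to $\mathscr S_{f\oplus g,(\x,\y)}^{\phi}=\mathscr S_{f,\x}^{\phi}*\mathscr S_{g,\y}^{\phi}$.

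The main obstacle I expect is the bookkeeping in the $X_{=}$ case: one must check that the two independent arc-rotation $\mu_m$-actions on the $x$- and $y$-sides are matched correctly with the $\mu_m\times\mu_m$-action built into $F_1^m$ and $F_0^m$, and that the diagonal $\mu_m$-action defining $*$ is the one induced by the overall rotation $\xi\cdot(x,y)=(\varphi(\xi t),\psi(\xi t))$; a secondary difficulty is to verify, compatibly with rationality and the limit $T\to\infty$, that the Fermat contributions at different $m$ assemble into a single class in $\mathscr M_{k,\loc}^{\hat{\mu}}$, together with the reconciliation of the signs packaged in $\mathscr S^{\phi}$ so that the final identity lands on the nose. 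Because the Hrushovski--Loeser machinery is set up over $\mathscr M_{k,\loc}^{\hat{\mu}}$, the argument proves the identity only after localisation at $\{1-\Lbb^{i}\}$, as flagged in the introduction, but this is all that is claimed here.
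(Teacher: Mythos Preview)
Your overall strategy — decompose the analytic Milnor fiber of $f\oplus g$ by comparing $\val f(x)$ and $\val g(y)$, and recognise the Fermat pieces on the equal-valuation locus — is exactly the paper's approach in Section~\ref{SSregular}. However, the execution contains a genuine error that makes the argument fail as written.

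The displayed identity
\[
\mathscr S_{f\oplus g,(\x,\y)} = \mathscr S_{f,\x}(1-\mathscr S_{g,\y}) + (1-\mathscr S_{f,\x})\mathscr S_{g,\y} + \mathscr S_{f,\x}*\mathscr S_{g,\y}
\]
does \emph{not} rearrange to $\mathscr S_{f\oplus g}^{\phi}=\mathscr S_{f}^{\phi}*\mathscr S_{g}^{\phi}$. Carrying out the substitution you describe (using $1*a=a$ and bilinearity of $*$) yields instead
\[
\mathscr S_{f\oplus g}^{\phi}=2\,\mathscr S_{f}^{\phi}\cdot\mathscr S_{g}^{\phi}-\mathscr S_{f}^{\phi}*\mathscr S_{g}^{\phi},
\]
where $\cdot$ is the ordinary (diagonal-$\hat\mu$) product in $\mathscr M_k^{\hat\mu}$. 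This equals $\mathscr S_{f}^{\phi}*\mathscr S_{g}^{\phi}$ only if $\cdot$ coincides with $*$, which it does not. The correct identity, equation~(\ref{neww}) in the paper, is
\[
\mathscr S_{f\oplus g}=\mathscr S_{f}+\mathscr S_{g}-\mathscr S_{f}*\mathscr S_{g},
\]
so two of your three piecewise values are off. First, the ``thick disk'' $\{y\in\mathfrak m^{d_2}:\val g(y)>1\}$ contributes $1$, not $1-\mathscr S_{g,\y}$: already for $g(y)=y$ this set is an open ball with $\hl=1$ by Proposition~\ref{virignia}(iii), whereas $1-\mathscr S_{g,\y}=0$. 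Second, $\hl([X_{=}])$ equals $-\mathscr S_{f}*\mathscr S_{g}$, not $+\mathscr S_{f}*\mathscr S_{g}$; the sign is built into the definition of $*$ in Subsection~\ref{TSconv}.

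More seriously, you have skipped the part of $X_{=}$ where $\val f(x)=\val g(y)<1$, which is the genuine technical core. The paper handles this by splitting $Z^{*}=Z_1^{*}\sqcup Z_{<1}^{*}$, introducing the auxiliary set $Z_0$, proving $\hl([Z_1^{*}]-[Z_0])=-\mathscr S_f*\mathscr S_g$ via the explicit Fermat-variety isomorphisms of Lemma~\ref{ICTP}, and then establishing the cancellation $\hl([Z_{<1}^{*}]+[Z_0])=0$ through the integral-over-$\Gamma$ argument of Subsection~\ref{subsec5.3}. Without $Z_0$ neither the $F_0^m$ piece of the convolution nor the compensation of the $<1$ stratum appears, and the Fermat bookkeeping you flag as the ``main obstacle'' cannot be completed.
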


\begin{remark}
In fact, in \cite{DL3} and \cite{Loo}, one proved the motivic Thom-Sebastiani theorem in the framework of motivic vanishing cycles, which implies Theorem \ref{thom-sebastiani}.
\end{remark}

\section{The motivic Thom-Sebastiani formula for formal functions}\label{FormalTS}
Let $\mathfrak X$ be a generically smooth special formal $k[[t]]$-scheme of relative dimension $d$, with reduction $\mathfrak X_0$ and structural morphism $\mathfrak f$. Let $\x$ be a closed point of $\mathfrak X_0$. 
\subsection{The motivic Milnor fiber of a formal function}
By \cite{Tem} (see also \cite{Ni2}), there exists a resolution of singularities $\mathfrak h: \mathfrak Y \to \mathfrak X$ of $\mathfrak X_0$. Let $\mathfrak E_i$, $i\in J$, be the irreducible components of $(\mathfrak Y_s)_{\mathrm{red}}$. Let $N_i$ be the multiplicity of $\mathfrak E_i$ in $\mathfrak Y_s$. We set $E_i=(\mathfrak E_i)_0$ for $i\in J$, $E_I=\bigcap_{i\in I}E_i$ and $E_I^{\circ}=E_I\setminus\bigcup_{j\not\in I}E_j$ for a nonempty subset $I$ of $J$. Let $\{U\}$ be a covering of $\mathfrak Y$ by affine open subschemes with $U\cap E_I^{\circ}\not=\emptyset$ such that, on this piece, $\mathfrak f\circ\mathfrak h= \widetilde{u}\prod_{i\in I}y_i^{N_i}$, where $\widetilde{u}$ is a unit, $y_i$ is a local coordinate defining $E_i$. Set $m_I:=\gcd(N_i)_{i\in I}$. One can construct as in \cite{DL5} an unramified Galois covering $\pi_I:\widetilde{E}_I^{\circ}\to E_I^{\circ}$ with Galois group $\mu_{m_I}$, which is given over $U\cap E_I^{\circ}$ by 
$$\{(z,y)\in \mathbb{A}_k^1\times(U\cap E_I^{\circ}) : z^{m_I}=\widetilde{u}(y)^{-1}\}.$$
$\widetilde{E}_I^{\circ}$ is endowed with a natural $\mu_{m_I}$-action good over $E_I^{\circ}$ obtained by multiplying the $z$-coordinate with elements of $\mu_{m_I}$. We also restrict this covering over $E_I^{\circ}\cap \mathfrak h^{-1}(\x)$ and obtain a class, written as $[\widetilde{E}_I^{\circ}\cap \mathfrak h^{-1}(\x)]$, in $\mathscr M_k^{\hat{\mu}}$. The {\it motivic Milnor fiber of the formal germ $(\mathfrak X,\x)$, or of $\mathfrak f$ at $\x$}, is defined to be the quantity 
\begin{align*}
\sum_{\emptyset\not=I\subset J} (1-\Lbb)^{|I|-1}[\widetilde E_I^{\circ}\cap \mathfrak h^{-1}(\x)]
\end{align*}
in $\mathscr M_k^{\hat{\mu}}$. We denote it by $\mathscr S(\mathfrak X,\x)$ or by $\mathscr S_{\mathfrak f,\x}$. By \cite[Lemma 5.7]{Thuong3}, using volume Poincar\'e series, $\mathscr S_{\mathfrak f,\x}$ is well defined, i.e., independent of the choice of the resolution of singularities $\mathfrak h$. 

\begin{remark}
Let $\widehat{\mathfrak X}_{\x}$ denote the formal completion of $\mathfrak X$ at $\x$, and let $\mathfrak f_{\x}$ be the structural morphism of $\widehat{\mathfrak X}_{\x}$, which is induced by $\mathfrak f$. We are able to use a resolution of singularity of $\mathfrak X$ at $\x$ to define the motivic Milnor fiber $\mathscr S_{\mathfrak f_{\x},\x}$. Then, it is clear that $\mathscr S_{\mathfrak f,\x}=\mathscr S_{\mathfrak f_{\x},\x}$.
\end{remark}

\subsection{Integral of a gauge form and volume Poincar\'e series}\label{subsec-LS}
\subsubsection{Stft formal schemes}
Assume that $\mathfrak X$ is a separated generically smooth formal $k[[t]]$-scheme topologically of finite type and that the relative dimension of $\mathfrak X$ is $d$. One may regard $\mathfrak X$ as the inductive limit of the $k[t]/(t^{m+1})$-schemes topologically of finite type $\mathscr X_m=(\mathfrak X, \mathcal O_{\mathfrak X}\otimes_{k[[t]]}k[t]/(t^{m+1}))$ in the category of formal $k[[t]]$-schemes. By Greenberg \cite{Gr}, there exists a unique $k$-scheme $\Gr_m(\mathscr X_m)$ topologically of finite type, up to isomorphism, which for any $k$-scheme $\mathscr Y$ admits a natural bijection 
$$\Hom_k(\mathscr Y,\Gr_m(\mathscr X_m))\to\Hom_{\Spec(k)}(\mathscr Y\times_kk[t]/(t^{m+1}),\mathscr X_m).$$
These $k$-schemes $\Gr_m(\mathscr X_m)$ together with the natural translation gives rise to a projective system, we denote its limit by $\Gr(\mathfrak X)$ (cf. \cite{Se}, \cite{LS}). We denote by $\pi_m$ the canonical projection $\Gr(\mathfrak X)\to\Gr_m(\mathscr X_m)$. See more in \cite{Gr} for some basic properties of the functor $\Gr$.

By \cite{Se}, \cite{LS}, the motivic measure of a stable cylinder $A$ in $\Gr(\mathfrak X)$ is the following 
\begin{align}\label{measure}
\mu(A)=[\pi_{\ell}(A)]\Lbb^{-(\ell+1)d}
\end{align}
for $\ell\in\mathbb N$ large enough. Let $\alpha: A\to \mathbb Z \cup \{\infty\}$ be a function on $A$ that takes only a finite number of values such that every fiber $\alpha^{-1}(m)$ is a stable cylinder in $\Gr(\mathfrak X)$. Let $\omega$ be a gauge form on $\mathfrak X_{\eta}$. By \cite[Proposition 1.5]{BLR} (see also \cite{LS}), there exists a canonical isomorphism $\Omega_{\mathfrak X_{\eta}}^d(\mathfrak X_{\eta})\cong \Omega_{\mathfrak X|k[[t]]}^d(\mathfrak X)\otimes_{k[[t]]}k((t))$, thus there exist an $n$ in $\mathbb N$ and a differential form $\widetilde\omega$ in $\Omega_{\mathfrak X|k[[t]]}^d(\mathfrak X)$ such that $\omega=t^{-n}\widetilde\omega$. Let $\varphi$ be a point of $\Gr(\mathfrak X)$ outside $\Gr(\mathfrak X_{\sing})$. Then, we can regard it as a morphism of formal schemes $\Spf(k[[t]])\to \mathfrak X$, or as a morphism of rings $\mathcal O_{\mathfrak X}(\mathfrak X)\to k[[t]]$. Thus it induces a morphism of rings $\widetilde\varphi:\varphi^*\Omega_{\mathfrak X|k[[t]]}^d(\mathfrak X)\to k[[t]]$, which is a surjection. One defines 
\begin{align}\label{order}
\ord(\widetilde\omega)(\varphi)=\ord_t(\widetilde\varphi(\varphi^*\widetilde\omega))\ \ \text{and}\ \ \ord_{\mathfrak X}(\omega)=\ord(\widetilde\omega)-n.
\end{align}
The latter is independent of the choice of $\widetilde\omega$ (cf. \cite{LS}). Since $\omega$ is a gauge form, it follows from \cite[Proof of 4.1.2]{LS} that $\ord_{\mathfrak X}(\omega)$ is a integer-valued function taking only a finite number of values and that its fibers are stable cylinder. Then one defines (cf. \cite{Se}, \cite{LS})
\begin{align}\label{integral}
\int_{\mathfrak X_{\eta}}|\omega|:=\sum_{m\in\mathbb Z}\mu\left(\{\varphi\in\Gr(\mathfrak X)\mid \ord_{\mathfrak X}(\omega)(\varphi)=m\}\right)\Lbb^{-m}\in \mathscr M_k.
\end{align}

\subsubsection{Special formal schemes}\label{3.2.2}
We consider the more general case where $\mathfrak X$ is a generically smooth special formal $k[[t]]$-schemes (see \cite{Ab} for definition). Let $\mathfrak Y \to\mathfrak X$ be a {\it N\'eron smoothening for $\mathfrak X$}, i.e. a morphism of special formal $k[[t]]$-schemes, $\mathfrak Y$ adic smooth over $k[[t]]$, inducing an open embedding $\mathfrak Y_{\eta} \to \mathfrak X_{\eta}$ with $\mathfrak Y_{\eta}\widehat{\otimes}_{k((t))}K=\mathfrak X_{\eta}\widehat{\otimes}_{k((t))}K$ for any finite unramified extension $K$ of $k((t))$. It exists by \cite{Ni2}, furthermore, we are able to (and we shall from now on) choose $\mathfrak Y$ to be separated generically smooth formal $k[[t]]$-scheme topologically of finite type. Using \cite[Propositions 4.7, 4.8]{Ni2}, for any gauge form $\omega$ on $\mathfrak X_{\eta}$, we define 
$$\int_{\mathfrak X_{\eta}}|\omega|:= \int_{\mathfrak Y_{\eta}}|\omega| \in \mathscr M_k.$$

For any $m$ in $\mathbb N_{>0}$, let $\mathfrak X(m):=\mathfrak X\widehat{\otimes}_{k[[t]]}k[[t^{1/m}]]$, $\mathfrak X_{\eta}(m):=\mathfrak X_{\eta}\widehat{\otimes}_{k((t))}k((t^{1/m}))$ and $\omega(m)$ the pullback of $\omega$ via the natural morphism $\mathfrak X_{\eta}(m)\to \mathfrak X_{\eta}$. The N\'eron smoothening $\mathfrak Y \to\mathfrak X$ for $\mathfrak X$ induces a N\'eron smoothening $\mathfrak Y(m)\to\mathfrak X(m)$ for $\mathfrak X(m)$ and $\mathfrak Y(m)$ is also topologically of finite type, like $\mathfrak Y$. The canonical $\mu$-action on $\Gr(\mathfrak Y(m)$ is given by $a\varphi(t^{1/m})=\varphi(at^{1/m})$. It induces a $\mu_m$-action on $\int_{\mathfrak X_{\eta}}|\omega|$, thus we regard $\int_{\mathfrak X_{\eta}}|\omega|$ as an element of $\mathscr M_k^{\hat{\mu}}$.

\subsubsection{Volume Poincar\'e series}
Let $\mathfrak X$ be a generically smooth special formal $k[[t]]$-schemes, $\x$ a closed point of $\mathfrak X_0$ and $\widehat{\mathfrak X}_{\x}$ the formal completion of $\mathfrak X$ at $\x$. Denoting by $]\x[$ the tube of $\x$, namely {\it the analytic Milnor fiber of $\mathfrak f$ at $\x$} (cf. \cite{NS}), we have the canonical isomorphism $]\x[\ \cong (\widehat{\mathfrak X}_{\x})_{\eta}$. Set $]\x[_m:=]\x[\times_{k((t))}k((t^{1/m}))$. Let us consider the volume Poincar\'e series of $(]\x[,\omega)$, where $\omega$ is a gauge form on $]\x[$, (cf. \cite{Ni2})
\begin{align*}
S(]\x[,\omega; T):=\sum_{m\geq 1}\left(\int_{]\x[_m}|\omega(m)|\right)T^m \in \mathscr M_k^{\hat{\mu}}[[T]].
\end{align*}

\begin{remark}
More generally, the volume Poincar\'e series of separated generically smooth formal schemes topologically of finite type (resp. separated quasi-compact smooth rigid varieties) were introduced and studied first by Nicaise-Sebag in \cite{NS}. After that, Nicaise \cite{Ni2} studied these objects in the framework of generically smooth special formal schemes (resp. bounded smooth rigid varieties).

In practice, one may assume $\omega$ is $\widehat{\mathfrak X}_{\x}$-bounded, i.e., $\omega$ lies in the image of the natural map $(\Omega_{\widehat{\mathfrak X}_{\x}|k[[t]]}^d\otimes_{k[[t]]}k((t)))(\widehat{\mathfrak X}_{\x})\to \Omega_{]\x[|k((t))}^d(]\x[)$ (cf. \cite[Definition 2.11]{Ni2}).  
\end{remark}

Since $k$ is an algebraically closed field, $S(]\x[,\omega; T)$ is independent of the choice of the uniformizing parameter $t$. Indeed, let $t'$ be another uniformizing parameter for $k[[t]]$. Then $t'=\alpha t$, where $\alpha=\alpha(t)\in k[[t]]$ and $\alpha(0)\in k^{\times}$. Since $k$ contains all roots, the $m$th roots of $\alpha$ are again in $k[[t]]$. This induces a canonical isomorphism of $k((t))$-fields $k((t^{1/m}))\to k((t^{\prime 1/m}))$, that implies the previous claim. By Nicaise \cite[Corollary 7.13]{Ni2}, if the gauge form $\omega$ is $\widehat{\mathfrak X}_{\x}$-bounded, this series $S(]\x[,\omega; T)$ is a rational function. 

\begin{proposition}
With the notation and the hypotheses as previous, the following identity holds in $\mathscr M_k^{\hat{\mu}}$: 
$$\mathscr S_{\mathfrak f,\x}=-\Lbb^d\lim_{T\to\infty}\sum_{m\geq 1}\left(\int_{]\x[_m}|\omega(m)|\right)T^m.$$
\end{proposition}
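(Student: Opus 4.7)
The plan is to evaluate both sides via the same resolution $\mathfrak h: \mathfrak Y \to \mathfrak X$ used to define $\mathscr S_{\mathfrak f,\x}$, and then reduce the identity to a one-line limit calculation. By the remark that $\mathscr S_{\mathfrak f,\x} = \mathscr S_{\mathfrak f_{\x},\x}$ and the isomorphism $]\x[\,\cong (\widehat{\mathfrak X}_{\x})_{\eta}$, I may first localize the problem at $\x$: replace $\mathfrak X$ by $\widehat{\mathfrak X}_{\x}$ and use the induced resolution, whose exceptional divisors are the pieces $E_i \cap \mathfrak h^{-1}(\x)$ with the same multiplicities $N_i$ and the same $\mu_{m_I}$-covers $\widetilde{E}_I^{\circ} \cap \mathfrak h^{-1}(\x)$.

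The key ingredient is the resolution formula for the volume Poincar\'e series, as in Nicaise \cite{Ni2} (building on the stft case of Nicaise--Sebag \cite{NS}). Writing $\nu_i$ for the relevant order of $\omega$ along $\mathfrak E_i$ attached to the chosen $\widehat{\mathfrak X}_{\x}$-bounded gauge form, this formula reads
\begin{equation*}
\sum_{m\geq 1}\left(\int_{]\x[_m}|\omega(m)|\right)T^m = \Lbb^{-d}\sum_{\emptyset\neq I\subset J}(\Lbb-1)^{|I|-1}\,[\widetilde E_I^{\circ}\cap \mathfrak h^{-1}(\x)]\prod_{i\in I}\frac{\Lbb^{-\nu_i}T^{N_i}}{1-\Lbb^{-\nu_i}T^{N_i}},
\end{equation*}
which is a rational function in $T$ in the sense recalled in Section~\ref{sec2}. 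This is the step where essentially all the geometric content is packed: one stratifies $\Gr(\mathfrak Y(m))$ according to which $\widetilde E_I^{\circ}$ an arc meets, writes the integral using the local expression $\mathfrak f\circ\mathfrak h = \widetilde u\prod_{i\in I}y_i^{N_i}$, and reorganizes the geometric series summation over $m$.

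Having this, the conclusion is immediate. Using the convention $\lim_{T\to\infty}\bigl(\Lbb^{a}T^{b}/(1-\Lbb^{a}T^{b})\bigr)=-1$, each factor in the product contributes $-1$, giving an overall sign of $(-1)^{|I|}$. Combining with $(\Lbb-1)^{|I|-1} = (-1)^{|I|-1}(1-\Lbb)^{|I|-1}$, the two signs multiply to $-1$, so
\begin{equation*}
\lim_{T\to\infty}\sum_{m\geq 1}\left(\int_{]\x[_m}|\omega(m)|\right)T^m = -\Lbb^{-d}\sum_{\emptyset\neq I\subset J}(1-\Lbb)^{|I|-1}\,[\widetilde E_I^{\circ}\cap \mathfrak h^{-1}(\x)] = -\Lbb^{-d}\,\mathscr S_{\mathfrak f,\x}.
\end{equation*}
Multiplying by $-\Lbb^d$ yields the asserted identity. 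Note that independence of the result from $\omega$ (and from the ambiguity in $\nu_i$) is built into the limit, since the $\nu_i$'s disappear under $\lim_{T\to\infty}$; this also gives a separate sanity check that the right-hand side is well-defined.

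The main obstacle is the explicit resolution formula for the volume Poincar\'e series on $]\x[$: one must confirm that the formulas of \cite{Ni2,NS} (which are stated for the generic fiber of a special formal scheme together with a suitable gauge form) restrict correctly to the analytic Milnor fiber and produce exactly the classes $[\widetilde E_I^{\circ}\cap \mathfrak h^{-1}(\x)]$ rather than the global classes $[\widetilde E_I^{\circ}]$. This reduces to checking that, after passing to $\widehat{\mathfrak X}_{\x}$, the Greenberg arc spaces involved are precisely the ones whose reduction lies over $\x$, which follows from the tube description of $]\x[$ and the local-on-$\mathfrak Y$ nature of both the measure \eqref{measure} and the order function \eqref{order}. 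Everything else is a bookkeeping of signs and of the factor $\Lbb^{-d}$.
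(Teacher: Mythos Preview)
Your approach is essentially the one the paper takes: both arguments feed the definition of $\mathscr S_{\mathfrak f,\x}$ and Nicaise's resolution formula for the volume Poincar\'e series (Proposition 7.36 in \cite{Ni2}) into the limit $\lim_{T\to\infty}$, and the sign/power bookkeeping you spell out is exactly what lies behind the one-line citation in the paper. Your discussion of why the restriction to the tube $]\x[$ yields the classes $[\widetilde E_I^{\circ}\cap \mathfrak h^{-1}(\x)]$ is more explicit than the paper's and is fine.

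There is one point you skate over that the paper singles out: the identity is asserted in $\mathscr M_k^{\hat{\mu}}$, not merely in $\mathscr M_k$. Nicaise's formula in \cite{Ni2} is stated and proved non-equivariantly, so writing your displayed resolution formula directly with the $\hat\mu$-equivariant classes $[\widetilde E_I^{\circ}\cap \mathfrak h^{-1}(\x)]$ is not automatic. The paper handles this by invoking the proof of Lemma~5.7 in \cite{Thuong3}, where one checks that the $\mu_m$-action on $\Gr(\mathfrak Y(m))$ given by $a\cdot\varphi(t^{1/m})=\varphi(at^{1/m})$ is compatible with the stratification by the $E_I^{\circ}$ and with the Galois $\mu_{m_I}$-covers, so that the resolution formula upgrades term-by-term to $\mathscr M_k^{\hat\mu}$. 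You should add a sentence to that effect; otherwise your argument only establishes the identity in $\mathscr M_k$.
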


\begin{proof}
The identity is true in $\mathscr M_k$ because of the definition of $\mathscr S_{\mathfrak f,\x}$ as well as Nicaise's formula for $-\lim_{T\to\infty}S(]\x[,\omega; T)$ in \cite[Proposition 7.36]{Ni2}. To see that it is true in $\mathscr M_k^{\hat{\mu}}$, we refer to the proof of Lemma 5.7 in \cite{Thuong3}.
\end{proof}


\subsection{Statement of result for formal functions}\label{statement}
Given integers $d_1\geq 1$ and $d_2\geq 1$. Let $f$ be a formal power series in $k[[x]]$ with $f(0)=0$ and $g$ in $k[[y]]$ with $g(0)=0$. Here $x=(x_1,\dots,x_{d_1})$, $y=(y_1,\dots,y_{d_2})$ and we write by the same symbol $0$ for the origin of $\mathbb A_k^{d_1}$, $\mathbb A_k^{d_2}$ or $\mathbb A_k^1$ (whenever necessary, e.g., in Section \ref{CC}, however, we shall write $0_{d_i}$ for the origin of $\mathbb A_k^{d_i}$, $i\in\{1, 2\}$). Let us consider the following special formal $k[[t]]$-schemes
\begin{align*}
\mathfrak X:&=\Spf\left(k[[t,x]]/(f(x)-t)\right),\\ 
\mathfrak Y:&=\Spf\left(k[[t,y]]/(g(y)-t)\right),\\
\mathfrak X\oplus \mathfrak Y:&=\Spf\left(k[[t,x,y]]/(f(x)+g(y)-t)\right),
\end{align*}
with structural morphisms $\mathfrak f$, $\mathfrak g$ and $\mathfrak f\oplus \mathfrak g$ induced by $f$, $g$ and $f\oplus g$, respectively. 
Set $\mathscr S_{\mathfrak f,0}^{\phi}:=(-1)^{d_1-1}(\mathscr S_{\mathfrak f,0}-1)$ and the same for $\mathfrak g$ and $\mathfrak f\oplus \mathfrak g$. 
We now set up the statement of the motivic Thom-Sebastiani theorem for formal schemes and then prove it in the setting of $\mathscr M_{k,\loc}^{\hat{\mu}}$, using Hrushovski-Kahdan's integration \cite{HK} via the work of Hrushovski and Loeser \cite{HL}. 

\begin{theorem}\label{TSformal}
The identity $\mathscr S_{\mathfrak f\oplus \mathfrak g,(0,0)}^{\phi}=\mathscr S_{\mathfrak f,0}^{\phi}*\mathscr S_{\mathfrak g,0}^{\phi}$ holds in $\mathscr M_{k,\loc}^{\hat{\mu}}$.
\end{theorem}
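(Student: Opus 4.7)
The plan is to parallel the strategy of Section \ref{SSregular} for the regular Thom-Sebastiani theorem, exchanging motivic zeta functions of arcs for volume Poincaré series of analytic Milnor fibers, using the proposition established in the previous subsection that
$$\mathscr S_{\mathfrak f,0}=-\Lbb^{d_1}\lim_{T\to\infty}S(]0[_{\mathfrak f},\omega_1;T),$$
and the analogous identities for $\mathfrak g$ and $\mathfrak f\oplus\mathfrak g$ with appropriate gauge forms $\omega_2$ and $\omega_1\wedge\omega_2$ (which are $\widehat{(\mathfrak X\oplus\mathfrak Y)}_{(0,0)}$-bounded). Note that the analytic Milnor fiber $]0[_{\mathfrak f}$ is precisely the smooth rigid space whose $k((t))^{\alg}$-points are tuples $x$ in the maximal ideal with $f(x)=t$, and similarly for the others, so all three objects live naturally in the Hrushovski–Loeser framework of Section \ref{HLmorphismsssss} over $\mathrm{ACVF}(0,0)$.

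The first step is to rewrite each series $S(]0[_{\mathfrak f},\omega_1;T)$ as a motivic integral over the definable set
$$X_{\mathfrak f}:=\{x\in\mathcal{M}^{d_1}: f(x)=t\},$$
and likewise $X_{\mathfrak g}\subset\mathcal{M}^{d_2}$ and $X_{\mathfrak f\oplus\mathfrak g}\subset\mathcal{M}^{d_1+d_2}$, using the slight extension of the Hrushovski–Loeser construction to the formal/rigid setting already exploited in \cite{Thuong3}. The key step is then to partition
$$X_{\mathfrak f\oplus\mathfrak g}=A_<\sqcup A_>\sqcup A_=$$
according as $\val(f(x))<\val(g(y))$, $\val(f(x))>\val(g(y))$, or $\val(f(x))=\val(g(y))$. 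On $A_<$ the constraint $f(x)+g(y)=t$ forces $\val(f(x))=\val(t)$, so the $y$-coordinate ranges freely in the open subset of $\mathcal{M}^{d_2}$ where $\val(g(y))>\val(t)$; integrating out the $y$-variable yields an expression whose limit at $T\to\infty$ factors through $\mathscr S_{\mathfrak f,0}\cdot 1$. Symmetrically for $A_>$. On $A_=$, one introduces the residue parameter $\zeta=-\rv(f(x))/\rv(g(y))\in k^{\times}$ (after passing to a ramification of order $m$ where $m$ is the common value of the relevant divisors), and this is exactly the Fermat-type structure that yields the convolution product in the limit.

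Assembling the three contributions through the limit $T\to\infty$, which is controlled via rationality (Nicaise, \cite[Corollary 7.13]{Ni2}) and behaves well with the localization map $\loc$, yields
$$\mathscr S_{\mathfrak f\oplus\mathfrak g,(0,0)}-1=\bigl(\mathscr S_{\mathfrak f,0}-1\bigr)+\bigl(\mathscr S_{\mathfrak g,0}-1\bigr)-\bigl(\mathscr S_{\mathfrak f,0}-1\bigr)\bigl(\mathscr S_{\mathfrak g,0}-1\bigr)+C_{=},$$
where $C_{=}$ is the contribution of $A_=$ and, after identifying the Fermat-like fiber structure with the definition of $*$ from Subsection \ref{TSconv}, one checks that rewriting this in terms of the $\phi$-normalized classes gives precisely
$$\mathscr S_{\mathfrak f\oplus\mathfrak g,(0,0)}^{\phi}=\mathscr S_{\mathfrak f,0}^{\phi}*\mathscr S_{\mathfrak g,0}^{\phi}.$$
The main obstacles I anticipate are technical: first, verifying that the decomposition of $X_{\mathfrak f\oplus\mathfrak g}$ is compatible with the $\hat{\mu}$-action arising from the $m$-th root coverings so that the Fermat piece truly gives the convolution $*$ (rather than a twist), and second, justifying the interchange of the sum over $m$ defining $S(T)$ with the partition of the integration domain; this is what forces working in $\mathscr M_{k,\loc}^{\hat{\mu}}$ rather than $\mathscr M_k^{\hat{\mu}}$, exactly as in the regular case of Section \ref{SSregular}. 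The boundedness of the chosen gauge forms, needed to invoke rationality and the limit, should follow by construction since $f$, $g$ and $f\oplus g$ are elements of formal power series rings.
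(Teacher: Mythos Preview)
Your overall strategy matches the paper's: decompose the analytic Milnor fiber of $\mathfrak f\oplus\mathfrak g$ by comparing $\val f(x)$ with $\val g(y)$, identify the two off-diagonal pieces with $\mathscr S_{\mathfrak f,0}$ and $\mathscr S_{\mathfrak g,0}$, and extract the convolution from the diagonal piece. However, several steps in your sketch do not go through as written.

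First, on $A_<$ the constraint is the \emph{exact} equation $f(x)+g(y)=t$, so $y$ does not range freely: for each $x$ one has $g(y)=t-f(x)$, a genuine hypersurface condition. The paper handles this by invoking Theorem~\ref{comp}(iii) and (vi), which allow one to pass between the exact condition and the $\rv$-condition at the level of $\hl$; only after replacing $\Z^*$ by the $\rv$-variant $Z^{\fl}$ does the factorization into a product with an open polydisk become available. Your displayed intermediate identity containing the ordinary product $(\mathscr S_{\mathfrak f,0}-1)(\mathscr S_{\mathfrak g,0}-1)$ is not what emerges from the decomposition: the correct relation is simply $\mathscr S_{\mathfrak f\oplus\mathfrak g,(0,0)}=\mathscr S_{\mathfrak f,0}+\mathscr S_{\mathfrak g,0}-\mathscr S_{\mathfrak f,0}*\mathscr S_{\mathfrak g,0}$, with no ordinary-product term appearing.

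Second, and more seriously, your treatment of $A_=$ is too coarse. The paper further splits the diagonal locus according to whether $\val f(x)=1$ or $\val f(x)<1$, introduces an auxiliary set $Z_0^{\fl}$, and must prove separately that $\hl([Z_{<1}^{\fl}]+[Z_0^{\fl}])=0$ (an argument involving integration over $\Gamma$ and the o-minimal Euler characteristic, as in Subsection~\ref{subsec5.3}) and that $\hl([Z_1^{\fl}]-[Z_0^{\fl}])=-\loc(\mathscr S_{\mathfrak f,0}*\mathscr S_{\mathfrak g,0})$. For the latter, the genuinely formal ingredient you are missing is the passage through N\'eron smoothenings $\mathfrak Z_i\to\widehat{\mathfrak X}_{0}$, $\widehat{\mathfrak Y}_{0}$ with $\mathfrak Z_i$ topologically of finite type (Lemmas~\ref{lem6.1} and~\ref{lem6.2}): only after this reduction does one obtain convergent power series $\widetilde f$, $\widetilde g$ for which the truncated-arc computations of Lemma~\ref{ICTP} make sense, and hence for which the Fermat-variety description of the convolution can be verified termwise. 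Without this step there is no way to compute $\widetilde\hl_m$ on the diagonal piece, since $f$ and $g$ are a priori only formal and the sets $Z_1^{\fl}[m;\gamma]$ are not directly described by polynomial conditions.
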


The complete proof is given in Section \ref{CC}.
%
\section{Extension of Hrushovski-Loeser's morphism}\label{HLmorphismsssss}

\subsection{The theory $\ACVF_{k((t))}(0,0)$}
We consider the theory $\ACVF_{k((t))}(0,0)$ of algebraically closed valued fields of equal characteristic zero that extend $k((t))$ (cf. \cite{HK}). Its sort $\VF$ admits the language of rings, while the sort $\RV$ is endowed with abelian group operations $\cdot$, $/$, a unary predicate $\k^{\times}$ for a subgroup, a binary operation $+$ on $\k=\k^{\times}\cup\{0\}$. We also have an imaginary sort $\Gamma$ that is with a uniquely divisible abelian group. For a model $L$ of this theory, let $R_L$ (resp. $\mathfrak m_L$) denote its valuation ring (resp. the maximal ideal of $R_L$). The following are the ``elementary" $L$-definable sets of $\ACVF_{k((t))}(0,0)$:
$$\VF(L)=L,\ \RV(L)=L^{\times}/(1+\mathfrak m_L),\ \Gamma(L)=L^{\times}/R_L^{\times},\ \k(L)=R_L/\mathfrak m_L.$$
In general, a {\it definable subset} of $\VF^n(L)$ is a finite Boolean combination of set of the forms $\val(f_1)\leq \val(f_2)$ or $f_3=0$, where $f_i$ are polynomials with coefficients in $k((t))$. The same definition may apply to {\it definable subsets} of $\RV^n(L)$, $\Gamma^n(L)$ or $\k^n(L)$. Correspondingly, there are natural maps between these sets $\rv: \VF\to\RV$, $\val: \VF\to \Gamma$, $\val_{\rv}:\RV\to\Gamma$ and $\res:R_L\to\k(L)$. There is an exact sequence of groups 
$$1\to \k^{\times}\to \RV\stackrel{\val_{\rv}}{\to}\Gamma\to 0.$$


\subsection{Measured categories {\rm (following \cite{HK})}}\label{subsec4.3}
\subsubsection{$\VF$-categories} 
Let $\mu_{\Gamma}\VF$ be the category of $k((t))$-definable sets (or definable sets, for short) endowed with definable volume forms, up to $\Gamma$-equivalence. One may show that it is graded via the following subcategories $\mu_{\Gamma}\VF[n]$, $n\in \mathbb N$. An object of $\mu_{\Gamma}\VF[n]$ is a triple $(X,f,\varepsilon)$ with $X$ a definable subset of $\VF^{\ell}\times \RV^{\ell'}$, for some $\ell$, $\ell'$ in $\mathbb N$, $f:X\to \VF^n$ a definable map with finite fibers and $\varepsilon:X\to \Gamma$ a definable function; a morphism from $(X,f,\varepsilon)$ to $(X',f',\varepsilon')$ is a definable {\it essential} bijection $F:X\to X'$ such that 
$$\varepsilon=\varepsilon'\circ F+\val(\Jac F)$$ 
away from a proper closed subvariety of $X$. Here, that $F:X\to X'$ is an essential bijection means that there exists a proper closed subvariety $Y$ of $X$ such that $F|_{X\setminus Y}: X\setminus Y\to X'\setminus F(Y)$ is a bijection (see \cite[Subsection 3.8]{HK}).

Let $\mu_{\Gamma}\VF^{\bdd}[n]$ be the full subcategory of $\mu_{\Gamma}\VF[n]$ whose objects are bounded definable sets with bounded definable forms $\varepsilon$. If considering $\varepsilon:X\to \Gamma$ as the zero function, we obtain the categories $\vol\VF$ and $\vol\VF[n]$ as well as $\vol\VF^{\bdd}$ and $\vol\VF^{\bdd}[n]$. In this case, the measure preserving property of a morphism $F$ is characterized by the condition $\val(\Jac F)=0$, outside a proper closed subvariety.

\begin{convention}
For simplicity, we shall omit the symbol $f$ in the triple $(X,f,\varepsilon)$ if no possibility of confusion appears.
\end{convention}

\subsubsection{$\RV$-categories}
Similarly, we consider the category $\mu_{\Gamma}\RV$ graded by $\mu_{\Gamma}\RV[n]$, $n\in \mathbb N$. By definition, an object of $\mu_{\Gamma}\RV[n]$ is a triple $(X,f,\varepsilon)$ with $X$ a definable subset of $\RV^{\ell}$, for some $\ell$ in $\mathbb N$, $f:X\to\RV^n$ a definable map with finite fibers, and $\varepsilon: X\to \Gamma$ a definable function; a morphism $(X,f,\varepsilon)\to (X',f',\varepsilon')$ is a definable bijection $F:X\to X'$ such that 
$$\varepsilon+\sum_{i=1}^n\val_{\rv}(f_i)= \varepsilon'\circ F+\sum_{i=1}^n\val_{\rv}(f'_i\circ F)$$ 
away from a proper closed subvariety (the {\it measure preserving} property). The category $\mu_{\Gamma}\RES[n]$ is defined as the full subcategory of $\mu_{\Gamma}\RV[n]$ such that, for each object $(X,f,\varepsilon)$, $\val_{\rv}(X)$ is a finite set. The category $\mu_{\Gamma}\RV^{\bdd}$ is defined as $\mu_{\Gamma}\RV$ with $\val_{\rv}$-image of objects bounded below. In the case where, for each object $(X,f,\varepsilon)$ of one of the previous categories, taking $\varepsilon$ being the zero function, we get the subcategories $\vol\RV$, $\vol\RV^{\bdd}$ and $\vol\RES$.

In the present article, we also consider $\RES$, a category defined exactly as $\vol\RES$ but the measure preserving property is not required for morphisms.

\subsubsection{$\Gamma$-categories}
The category $\mu\Gamma[n]$ consists of pairs $(\Delta,l)$ with $\Delta$ a definable subset of $\Gamma^n$ and $l:\Delta\to\Gamma$ a definable map. A morphism $(\Delta,l)\to (\Delta',l')$ is a definable bijection $\lambda:\Delta\to\Delta'$ which is liftable to a definable bijection $\val_{\rv}^{-1}\Delta\to \val_{\rv}^{-1}\Delta'$ such that 
$$|x|+l(x)=|\lambda(x)|+l'(\lambda(x)).$$ 
The category $\mu\Gamma^{\bdd}[n]$ is the full subcategory of $\mu\Gamma[n]$ such that, for each object $(\Delta,l)$ of $\mu\Gamma^{\bdd}[n]$, there exists a $\gamma\in\Gamma$ with $\Delta\subset [\gamma,\infty)^n$. By definition, the categories $\mu\Gamma$ and $\mu\Gamma^{\bdd}$ are the direct sums $\bigoplus_{n\geq 1}\mu\Gamma[n]$ and $\bigoplus_{n\geq 1}\mu\Gamma^{\bdd}[n]$, respectively. The subcategories whose objects are of the form $(\Delta,0)$ will be denoted by $\vol\Gamma$ and $\vol\Gamma^{\bdd}$.


\subsection{Structure of $K(\mu_{\Gamma}\VF^{\bdd})$}
Let $\mathscr C$ be one of the categories in Subsection \ref{subsec4.3}. Then, as in \cite{HK}, we denote the Grothendieck semiring of $\mathscr C$ by $K_+(\mathscr C)$ and the associated ring by $K(\mathscr C)$. By \cite{HK}, there is a natural morphism of 
\begin{align}\label{morphismN}
N:K_+(\mu\Gamma^{\bdd})\otimes K_+(\mu_{\Gamma}\RES)\to K_+(\mu_{\Gamma}\VF^{\bdd})
\end{align}
constructed as follows. Note that two objects admitting a morphism $\lambda$ in $\mu\Gamma^{\bdd}[n]$ define the same element in $K_+(\mu\Gamma^{\bdd}[n])$, hence $\lambda$ lifts to a morphism in $\mu_{\Gamma}\VF^{\bdd}[n]$ between their pullbacks. Thus there exists a natural morphism $K_+(\mu\Gamma^{\bdd}[n])\to K_+(\mu_{\Gamma}\VF^{\bdd})$ mapping the class of $(\Delta,l)$ to the class of $(\val^{-1}(\Delta),l\circ\val)$. Also, for each object $(X,f,\varepsilon)$ in $\mu_{\Gamma}\RES[n]$, we may consider an \'etale map $\ell:X\to\k^n$. By this, we have the natural morphism $K_+(\mu_{\Gamma}\RES[n])\to K_+(\mu_{\Gamma}\VF^{\bdd})$ by sending the class of $(X,f,\varepsilon)$ to the class of $(X\times_{\ell,\res}R^n,\pr_1\circ \varepsilon)$. In particular, if $X$ is Zariski open in $\k^n$, then $X\times_{\ell,\res}R^n$ is simply $\res^{-1}(X)$.

\begin{theorem}[Hrushovski-Kazhdan \cite{HK}]
The morphism $N$ is a surjection. Moreover, it also induces a surjective morphism $N$ between the associated rings.
\end{theorem}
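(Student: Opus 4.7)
The plan is to exhibit, for every class $[X,f,\varepsilon]$ in $K_+(\mu_{\Gamma}\VF^{\bdd}[n])$, an explicit preimage in the source of $N$. The guiding idea is that definable sets in the $\VF$-sort of $\ACVF_{k((t))}(0,0)$ are, after a volume-preserving definable essential bijection, entirely controlled by data in the geometric sorts $\Gamma$ and $\k$, via the short exact sequence $1\to\k^{\times}\to\RV\xrightarrow{\val_{\rv}}\Gamma\to 0$ already recalled in the paper.

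As a first reduction, I would absorb the definable function $\varepsilon:X\to\Gamma$ into the $\Gamma$-factor. Exploiting definable Skolem functions in the divisible ordered abelian group $\Gamma$, one partitions $X$ into finitely many definable pieces on which $\varepsilon$ factors through $\val$ composed with a definable map. This realises the $\Gamma$-equivalence class of $(X,f,\varepsilon)$ as a pullback of a class in $\mu\Gamma^{\bdd}$ multiplied by a volume-free $\VF$-class, so the question is reduced to producing preimages of classes $[X]\in K_+(\vol\VF^{\bdd}[n])$ carrying the trivial volume form.

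The heart of the argument is then an $\RV$-cell decomposition theorem: every bounded definable subset of $\VF^n$ admits a finite definable partition into cells, each of which is, up to a definable essential bijection of vanishing Jacobian valuation, the $\rv$-pullback $\rv^{-1}(C_0)$ of a definable $C_0\subset\RV^n$ with $\val_{\rv}(C_0)$ bounded below. Such a statement is derived from elimination of $\VF$-quantifiers into the geometric sorts of $\ACVF_{k((t))}(0,0)$, and it realises $[X]$ as a finite sum of classes pulled back from $K_+(\mu_{\Gamma}\RV^{\bdd}[n])$. The splitting of the above short exact sequence is then used to fiber each $C_0$ over $\Delta:=\val_{\rv}(C_0)\subset\Gamma^n$ with $\RES$-fibers, producing an identity
\begin{align*}
[C_0]=\sum_i [\Delta_i,l_i]\cdot [R_i,\varepsilon_i]
\end{align*}
in $K_+(\mu_{\Gamma}\RV^{\bdd}[n])$, with $[\Delta_i,l_i]$ in $K_+(\mu\Gamma^{\bdd})$ and $[R_i,\varepsilon_i]$ in $K_+(\mu_{\Gamma}\RES)$. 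Unwinding the construction of $N$ recalled around (\ref{morphismN}), the tensor $\sum_i[\Delta_i,l_i]\otimes[R_i,\varepsilon_i]$ maps to $[X,f,\varepsilon]$.

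The main obstacle is the $\RV$-cell decomposition step: keeping track of the measure along the essential bijection (equivalently, the valuation of the Jacobian) forces one to work with genuine rv-cells having a controlled centre, rather than with an unstructured partition into balls. This is precisely where the full structural analysis of \cite{HK} enters. Granted surjectivity at the Grothendieck-semiring level, passage to the associated rings by formal group-completion is automatic, yielding the asserted surjection of rings.
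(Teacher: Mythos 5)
This theorem is not proved in the paper at all: it is imported verbatim from \cite{HK}, and what follows it in the text is only a description of how $N$ factors, namely through the surjection $K_+(\mu\Gamma^{\bdd})\otimes K_+(\mu_{\Gamma}\RES)\to K_+(\mu_{\Gamma}\RV^{\bdd})$ (with kernel generated by $1\otimes[\val_{\rv}^{-1}(\gamma)]_1-[\gamma]_1\otimes 1$) and then through the lifting map $\mathbf L$ into $K_+(\mu_{\Gamma}\VF^{\bdd}[n])$. Your outline is consistent with that factorization and with the actual Hrushovski--Kazhdan strategy: reduce the volume datum $\varepsilon$ to the $\Gamma$-part, show every bounded definable set is (up to measure-preserving essential bijection) a finite union of $\rv$-pullbacks, then split the resulting $\RV$-classes into $\Gamma$- and $\RES$-contributions. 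So the route is the right one.

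Two caveats. First, your appeal to ``the splitting of the short exact sequence $1\to\k^{\times}\to\RV\to\Gamma\to 0$'' is not correct as stated: this sequence does not split definably, and the fibers $\val_{\rv}^{-1}(\gamma)$ are $\k^{\times}$-torsors that need not be definably trivial. This is exactly why the kernel relation $1\otimes[\val_{\rv}^{-1}(\gamma)]_1\sim[\gamma]_1\otimes 1$ appears in the paper's description of $N$. The correct substitute is \cite[Proposition 10.10]{HK}, recalled in Remark \ref{rk1}: every class in $K_+(\mu_{\Gamma}\RV^{\bdd})$ is a finite sum of classes $[(X\times\val_{\rv}^{-1}(\Delta),f,\varepsilon)]=[(X,f_0,1)]\otimes[(\Delta,l)]$; that decomposition result, not a splitting, is what yields surjectivity onto $K_+(\mu_{\Gamma}\RV^{\bdd})$. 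Second, your central step --- the ``$\RV$-cell decomposition'' asserting that every bounded definable subset of $\VF^n$ is, modulo essential bijections with $\val(\Jac)=0$, a finite union of sets of the form $\mathbf L C_0$ --- is precisely the main theorem of \cite{HK} (proved there by an induction on special bijections), and you explicitly defer it to ``the full structural analysis of \cite{HK}.'' As a self-contained proof this is therefore a sketch with its hardest step outsourced; as a reconstruction of how the cited theorem is established, and of the two-step factorization the paper records after the statement, it is accurate apart from the splitting issue.
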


The description of $N$, or more precisely, $N^{-1}$ modulo $\ker(N)$, in \cite{HK} and \cite{HL}, is slightly more explicit and more intrinsic. Indeed, one first constructs the natural morphism 
$$K_+(\mu\Gamma^{\bdd})\otimes K_+(\mu_{\Gamma}\RES)\to K_+(\mu_{\Gamma}\RV^{\bdd})$$ 
due to the inclusion $\RES\subset\RV$ and the valuation map $\val_{\rv}$ (cf. \cite{HK} or \cite{HL}). This morphism is a surjection, its kernel is generated by $1\otimes[\val_{\rv}^{-1}(\gamma)]_1-[\gamma]_1\otimes 1$, with $\gamma$ definable in $\Gamma$. The subscript 1 means that the classes are in degree 1. Secondly, the canonical morphism 
$$K_+(\mu_{\Gamma}\VF^{\bdd}[n])\to K_+(\mu_{\Gamma}\RV^{\bdd}[n])/_{[1]_1\sim[\RV^{>0}]_1}$$ 
induces by the map $\Ob\mu_{\Gamma}\RV[n]\to\Ob\mu_{\Gamma}\VF[n]$ sending $(X,f,\varepsilon)$ to $(\mathbf L X,\mathbf L f,\mathbf L\varepsilon)$, 
where $\mathbf L X=X\times_{f,\rv}(\VF^{\times})^n$, $\mathbf L f(a,b)=f(a,\rv(b))$ and $\mathbf L\varepsilon (a,b)=\varepsilon(a,\rv(b))$.

\begin{remark}\label{rk1}
According to \cite[Proposition 10.10]{HK}, an element of $K_+(\mu_{\Gamma}\RV^{\bdd})$ may be written as a finite sum of elements of the form $[(X\times\val_{\rv}^{-1}(\Delta),f,\varepsilon)]$. Furthermore, an argument in the proof of \cite[Proposition 10.10]{HK} implies a fact that $[(X\times\val_{\rv}^{-1}(\Delta),f,\varepsilon)]=[(X,f_0,1)]\otimes [(\Delta,l)]$, where $f_0:X\to \RV^n$, $l:\Delta\to\Gamma$ are some definable functions.
\end{remark}


\subsection{Extending Hrushovski-Loeser's construction}\label{HLconst} 
\subsubsection{The morphisms $h_m$ and $\widetilde h_m$}
From now on, we shall denote by $!K(\RES)$ the quotient of $K(\RES)$ subject to the relations $[\val_{\rv}^{-1}(a)]=[\val_{\rv}^{-1}(0)]$ for $a$ in $\Gamma$, and by $!K(\RES)[\Lbb^{-1}]_{\loc}$ the localization of $!K(\RES)[\Lbb^{-1}]$ with respect to the multiplicative family generated by $1-[\mathbb A^1]^i$, $i\geq 1$.  Let $m, n$ be in $\mathbb N$, $m\geq 1$, $(\Delta,l)$ in $\mu\Gamma^{\bdd}[n]$ and $e$ in $\Gamma$ with $me\in\mathbb Z$. Set $\Delta(m)=\Delta\cap(1/m \mathbb Z)^n$, $\Delta_{l,e}=l^{-1}(e)$ and
\begin{align*}
\alpha_m(\Delta,l)&=\sum_{e\in \Gamma, me\in\mathbb Z}\sum_{\gamma\in \Delta_{l,e}(m)}\Lbb^{-m(|\gamma|+e)}(\Lbb-1)^n\\
&=\sum_{e\in\mathbb Z}\sum_{\gamma\in \Delta_{l,e/m}(m)}\Lbb^{-m|\gamma|-e}(\Lbb-1)^n.
\end{align*}
It is clear that $\alpha_m(\Delta,l)$ is an element of $!K(\RES)[\Lbb^{-1}]_{\loc}$, and moreover, $\alpha_m$ is independent of the choice of coordinates for $\Gamma^n$. Indeed, let $\lambda$ be the morphism in $\mu\Gamma^{\bdd}$ from $(\Delta_{l,e},l|_{\Delta_{l,e}})$ to $(\Delta',l')$. Then $|\lambda(\gamma)|+l'(\lambda(\gamma))=|\gamma|+l(\gamma)=|\gamma|+e$ and the claim follows. Thus $\alpha_m$ defines a natural morphism of rings 
$$\alpha_m: K(\mu\Gamma^{\bdd})\to !K(\RES)[\Lbb^{-1}]_{\loc}.$$
By using \cite{HL}, for any $\widetilde\Delta$ in $\vol\Gamma^{\bdd}$, one sets $\widetilde\alpha_m(\widetilde\Delta)=\sum_{\gamma\in \widetilde\Delta(m)}\Lbb^{-m|\gamma|}(\Lbb-1)^n$ and obtains a morphism of rings 
$$\widetilde\alpha_m: K(\vol\Gamma^{\bdd})\to !K(\RES)[\Lbb^{-1}]_{\loc}.$$
Thus we can consider $\alpha_m$ as an extension of $\widetilde\alpha_m$; moreover, 
\begin{align}\label{deco}
\alpha_m(\Delta,l)=\sum_{e\in\mathbb Z}\widetilde\alpha_m(\Delta_{l,e/m})\Lbb^{-e}.
\end{align}

We are able to construct a morphism $\beta_m:K(\mu_{\Gamma}\RES)\to !K(\RES)[\Lbb^{-1}]_{\loc}$ by using Hrushovski-Loeser's method. Thanks to Remark \ref{rk1}, however, it suffices to define value under $\beta_m$ of elements of the form $[(X,f,1)]$ with $(X,f,1)$ an object in $\mu_{\Gamma}\RES$. Assume that $f(X)\subset V_{\gamma_1}\times\cdots\times V_{\gamma_n}$, i.e., $\val_{\rv}(f_i(x))=\gamma_i$ for every $x$ in $X$. We set $\beta_m(X,f,1)=[X](\Lbb^{-1}[1]_1)^{m|\gamma|}$ if $m\gamma\in\mathbb Z^n$ and $\beta_m(X,f,1)=0$ otherwise.

There are two steps to check that $\ker(\widetilde\alpha_m\otimes \beta_m)$ is contained in $\ker(N_0)$, where $N_0$ is $N$ reduced to the volume version (for the structure of $K(\vol\VF^{\bdd})$). These steps correspond to the factorization of $N_0$ into $K(\vol\Gamma^{\bdd})\otimes K(\vol\RES)\to K(\vol\RV^{\bdd})$ and $K(\vol\VF^{\bdd}[n])\to K(\vol\RV^{\bdd}[n])/_{[1]_1\sim[\RV^{>0}]_1}$. Hrushovski and Loeser \cite{HL} passed these by direct computation. This can be applied to show that $\ker(\alpha_m\otimes \beta_m)$ is contained in $\ker(N)$. Consequently, we obtain from the tensor products $\widetilde\alpha_m\otimes \beta_m$ and $\alpha_m\otimes \beta_m$ morphisms of rings 
$$\widetilde h_m:K(\vol\VF^{\bdd})\to !K(\RES)[\Lbb^{-1}]_{\loc}$$ 
and 
$$h_m:K(\mu_{\Gamma}\VF^{\bdd})\to !K(\RES)[\Lbb^{-1}]_{\loc}.$$
Moreover, there is a presentation of $h_m$ in terms of $\widetilde h_m$ induced from (\ref{deco}). Namely, we have the following lemma whose proof is trivial and left to the reader.

\begin{lemma}\label{simpleee}
$h_m([(X,\varepsilon)])=\sum_{e\in\mathbb Z}\widetilde h_m([\varepsilon^{-1}(e/m)])\Lbb^{-e}$ (in $!K(\RES)[\Lbb^{-1}]_{\loc}$).
\end{lemma}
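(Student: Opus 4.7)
The plan is to reduce to pure tensors via the surjectivity of $N$ and then invoke the already-proved decomposition (\ref{deco}) for $\alpha_m$.

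First I would use Remark~\ref{rk1} together with the surjectivity of $N$ to write $[(X,\varepsilon)]$ in $K(\mu_{\Gamma}\VF^{\bdd})$ as a finite sum of images $N\bigl([(X_0,f_0,1)]\otimes[(\Delta,l)]\bigr)$ of pure tensors in $K(\mu_{\Gamma}\RES)\otimes K(\mu\Gamma^{\bdd})$. Since the left-hand side of the asserted identity is additive and each $\varepsilon^{-1}(e/m)$ on the right distributes additively over such a decomposition, it suffices to verify the formula on a single pure tensor. From the explicit recipe for $N$ recalled in Subsection~\ref{subsec4.3}, the $\RES$ factor $(X_0,f_0,1)$ contributes $(X_0\times_{\ell,\res}R^n,\pr_1)$ with trivial $\Gamma$-weight, while the $\Gamma$ factor $(\Delta,l)$ contributes $(\val^{-1}(\Delta),l\circ\val)$, so the resulting VF object carries weight $\varepsilon=l\circ\val$.

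Unwinding the definitions $h_m=\alpha_m\otimes\beta_m$ and substituting (\ref{deco}) then gives
\begin{align*}
h_m([(X,\varepsilon)])
&=\beta_m(X_0,f_0,1)\cdot\alpha_m(\Delta,l)\\
&=\sum_{e\in\mathbb Z}\beta_m(X_0,f_0,1)\cdot\widetilde\alpha_m(\Delta_{l,e/m})\,\Lbb^{-e}.
\end{align*}
On the other hand, slicing by $\varepsilon=l\circ\val=e/m$ on the VF side corresponds to restricting $\Delta$ to $\Delta_{l,e/m}$ on the $\Gamma$ side; since on this slice the weight becomes the constant $e/m$, which can be detached as the scalar factor $\Lbb^{-e}$, the remaining object is unweighted and thus
\begin{align*}
[\varepsilon^{-1}(e/m)]=N\bigl([(X_0,f_0,1)]\otimes[(\Delta_{l,e/m},0)]\bigr)\quad\text{in }K(\vol\VF^{\bdd}).
\end{align*}
Applying $\widetilde h_m=\widetilde\alpha_m\otimes\beta_m$ to the right-hand side returns $\beta_m(X_0,f_0,1)\cdot\widetilde\alpha_m(\Delta_{l,e/m})$, matching the $e$-th summand of the previous display. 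Summing over $e\in\mathbb Z$ (a finite sum by boundedness of $(\Delta,l)$ in $\mu\Gamma^{\bdd}$) yields the claimed identity.

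The only real point requiring attention is the compatibility between the slicing operation $\varepsilon^{-1}(e/m)$ on the VF side and the slicing $l^{-1}(e/m)=\Delta_{l,e/m}$ on the $\Gamma$ side under $N$; once this bookkeeping is made explicit the lemma is a direct translation of (\ref{deco}), which matches the author's assertion that the argument is trivial. I anticipate no substantive obstacle beyond this transport of $\varepsilon$ across $N$.
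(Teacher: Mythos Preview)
The paper leaves this proof to the reader as an immediate consequence of (\ref{deco}), and your reduction to pure tensors via Remark~\ref{rk1} and the surjectivity of $N$ is precisely how one makes that inference explicit. The slicing compatibility you flag follows directly from the description of $N$ on the $\Gamma$-factor as $(\Delta,l)\mapsto(\val^{-1}(\Delta),l\circ\val)$, so there is no gap.
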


\subsubsection{The morphism $h$}\label{4.4.2}
We also use the morphisms in \cite[Subsection 8.5]{HL} with their restriction, namely, $\alpha: K(\vol\Gamma^{\bdd})\to!K(\RES)[\Lbb^{-1}]$ and $\beta: K(\vol\RES)\to!K(\RES)[\Lbb^{-1}]$. By definition, $\beta([X])=[X]$, $\alpha([\Delta])=\chi(\Delta)(\Lbb-1)^n$ if $\Delta$ is a definable subset of $\Gamma^n$, where $\chi$ is the o-minimal Euler characteristic in the sense of \cite[Lemma 9.5]{HK}. Since $\ker(\alpha\otimes\beta)$ is contained in $\ker(N_0)$, it gives rise to a morphism of rings 
$$K(\vol\VF^{\bdd})\to!K(\RES)[\Lbb^{-1}].$$
The composition of it with the localization $!K(\RES)[\Lbb^{-1}]\to !K(\RES)[\Lbb^{-1}]_{\loc}$ will be denoted by $h$.

\begin{proposition}\label{HLmorph}
The formal series $Z'(X,\varepsilon)(T):=\sum_{m\geq 1}h_m([(X,\varepsilon)])T^m$ is a rational function. Moreover, we have $\lim_{T\to\infty}Z'(X,\varepsilon)(T)=-h([X])$.
\end{proposition}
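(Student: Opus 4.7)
The plan is to reduce the statement, via Hrushovski-Kazhdan's surjection $N:K(\mu\Gamma^{\bdd})\otimes K(\mu_{\Gamma}\RES)\to K(\mu_{\Gamma}\VF^{\bdd})$ together with the tensor decomposition of Remark~\ref{rk1}, to a single generator of the form
$$[(X,\varepsilon)] \;=\; N\bigl([(\Delta,l)]\otimes[(X_0,f_0,1)]\bigr),$$
with $(\Delta,l)\in\mu\Gamma^{\bdd}$ and $(X_0,f_0,1)\in\mu_{\Gamma}\RES$. Both $Z'(X,\varepsilon)(T)$ and $h([X])$ are $\mathbb{Z}$-linear in their argument, so this reduction is legitimate.

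On such a generator, the factorization $h_m=\alpha_m\otimes\beta_m$ from Section~\ref{HLconst} yields
$$h_m([(X,\varepsilon)]) \;=\; \alpha_m(\Delta,l)\cdot\beta_m(X_0,f_0,1),$$
while $h([X])=\chi(\Delta)(\Lbb-1)^n[X_0]$ after forgetting the $\mu_{\Gamma}$-structure. The factor $\beta_m(X_0,f_0,1)$ equals $[X_0]\Lbb^{-m|\gamma_0|}$ (using that $[1]_1$ is the class of a point in $!K(\RES)$) whenever $m\gamma_0\in\mathbb{Z}^n$, where $\gamma_0:=\val_{\rv}\circ f_0$, and vanishes otherwise; so its generating series lives on an arithmetic progression $m\in N\mathbb{Z}_{>0}$ and is geometric in $T$. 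The factor $\alpha_m(\Delta,l)$ is an Ehrhart-type sum over $(1/m)\mathbb{Z}^n\cap\Delta$ weighted by $(\Lbb-1)^n\Lbb^{-m(|\gamma|+l(\gamma))}$.

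For the rationality, I would apply o-minimal cell decomposition of $\Gamma$ (a divisible ordered abelian group) to split $(\Delta,l)$ into finitely many rational open polytopes on each of which $l$ is a rational affine function. A direct Ehrhart-type summation, cell by cell, then expresses $\sum_m\alpha_m(\Delta,l)T^m$ as a $\mathbb{Z}[\Lbb^{\pm 1}]_{\loc}$-linear combination of elementary fractions $\Lbb^aT^b/(1-\Lbb^aT^b)$ and constants, which combined with the geometric $\beta_m$-factor gives the rationality of $Z'(X,\varepsilon)(T)$. The limit then follows by applying the rule $\lim_{T\to\infty}\Lbb^aT^b/(1-\Lbb^aT^b)=-1$ term-by-term and invoking the o-minimal Euler characteristic identity to collapse the signed cell sum to $\chi(\Delta)$, which yields $-\chi(\Delta)(\Lbb-1)^n[X_0]=-h([X])$.

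The principal obstacle is verifying that, at $T=\infty$, the contributions of the $l$-weight and of the divisibility restriction coming from $\beta_m$ disappear and leave only the unweighted Euler characteristic of $\Delta$ times $[X_0]$; this is essentially the Ehrhart-reciprocity-style computation underlying Hrushovski-Loeser's volume case, and the role of localizing by the family $\{1-\Lbb^i\}$ in $!K(\RES)[\Lbb^{-1}]_{\loc}$ is precisely to make these manipulations valid in the Grothendieck ring. An arguably more economical organization is to first prove the statement for $\widetilde h_m$ on the $\vol$-subcategory (so that one may directly apply the computations of \cite{HL}), and then assemble $h_m$ via Lemma~\ref{simpleee} by decomposing $[(X,\varepsilon)]$ into its $\varepsilon$-fibers $[\varepsilon^{-1}(e/m)]$ and summing the associated geometric series in $\Lbb^{-e}T^m$.
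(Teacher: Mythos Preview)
Your proposal is correct and follows essentially the same approach as the paper, which simply refers to \cite[Proposition 8.5.1]{HL}: the reduction via $N$ and Remark~\ref{rk1} to generators of the form $[(\Delta,l)]\otimes[(X_0,f_0,1)]$, followed by the cell-by-cell Ehrhart-type summation for $\alpha_m$ combined with the geometric $\beta_m$-factor, is precisely the content of that reference adapted to the $\mu_{\Gamma}$-setting. Your alternative organization---first treating the $\vol$-case via $\widetilde h_m$ and then assembling $h_m$ through Lemma~\ref{simpleee}---is an equally valid and perhaps cleaner route to the same conclusion.
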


\begin{proof}
It is similar to the proof of \cite[Proposition 8.5.1]{HL}.
\end{proof}


\subsection{Endowing with a $\hat{\mu}$-action and the morphisms $\hl_m$, $\widetilde\hl_m$ and $\hl$}\label{action}
First, let us recall \cite[4.3]{HL}). Define a series $\left\{t_m\right\}_{m\geq 1}$ by setting $t_1=t$, $t_{nm}^m=t_n$, $n\geq 1$. For a $\k((t))$-definable set $X$ over $\RES$, we may assume $X\subset V_{i_1/m}\times\cdots\times V_{i_n/m}$ for some $n$, $m$ and $i_j$'s. It is endowed with a natural action $\delta$ of $\mu_m$. Now the $\k((t^{1/m}))$-definable function 
$$(x_1,\dots,x_n)\mapsto (x_1/\rv(t_m^{i_1}),\dots, x_n/\rv(t_m^{i_n}))$$ 
maps $X$ to a constructible subset $Y$ of $\mathbb A_{\k}^n$, where $Y$ is endowed with a $\mu_m$-action induced from $\delta$. The correspondence $X\mapsto Y$ in its turn defines a morphism of rings $!K(\RES)[\Lbb^{-1}]\to !K_0^{\hat{\mu}}(\Var_k)[\Lbb^{-1}]$ (\cite[Lemma 10.7]{HK}, \cite[Proposition 4.3.1]{HL}). Here, by definition, $!K_0^{\hat{\mu}}(\Var_k)$ is the quotient of $K_0^{\hat{\mu}}(\Var_k)$ by identifying all the classes $[\mathbb G_m, \sigma]$ with $\sigma$ a $\hat{\mu}$-action on $\mathbb G_m$ induced by multiplication by roots of $1$. The previous morphism together with the natural one $!K^{\hat{\mu}}(\Var_k)[\Lbb^{-1}]\to \mathscr M_k^{\hat{\mu}}$ induces the following morphisms of rings, both are denoted by $\Theta$,
$$!K(\RES)[\Lbb^{-1}] \to \mathscr M_k^{\hat{\mu}}\ \text{and}\ !K(\RES)[\Lbb^{-1}]_{\loc} \to \mathscr M_{k,\loc}^{\hat{\mu}}.$$

We now define ring morphisms $\hl_m:=\Theta\circ h_m$, $\widetilde\hl_m:=\Theta\circ \widetilde h_m$ and $\hl:=\Theta\circ h$ with the same target $\mathscr M_{k,\loc}^{\hat{\mu}}$. In fact, while $\hl_m$ has the source $K(\mu_{\Gamma}\VF^{\bdd})$, $\widetilde \hl_m$ and $\hl$ starts from $K(\vol\VF^{\bdd})$. Similar to Lemma \ref{simpleee} and Proposition \ref{HLmorph}, we get 

\begin{lemma}\label{simp}
$\hl_m([(X,\varepsilon)])=\sum_{e\in\mathbb Z}\widetilde \hl_m([\varepsilon^{-1}(e/m)])\Lbb^{-e}$ (in $\mathscr M_{k,\loc}^{\hat{\mu}}$).
\end{lemma}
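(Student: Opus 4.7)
The plan is to derive Lemma~\ref{simp} immediately from Lemma~\ref{simpleee} by transporting the latter identity along the ring morphism $\Theta$ constructed in Subsection~\ref{action}. First I would recall that, by the very definition in Subsection~\ref{action}, one has $\hl_m = \Theta \circ h_m$ and $\widetilde{\hl}_m = \Theta \circ \widetilde{h}_m$, where $\Theta \colon !K(\RES)[\Lbb^{-1}]_{\loc} \to \mathscr{M}_{k,\loc}^{\hat{\mu}}$ is a morphism of rings. In particular, $\Theta$ is additive, multiplicative, and maps $\Lbb$ to $\Lbb$.

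The key step is then to apply $\Theta$ to both sides of the identity provided by Lemma~\ref{simpleee}, namely
\[
h_m([(X,\varepsilon)]) = \sum_{e \in \mathbb Z} \widetilde{h}_m\bigl([\varepsilon^{-1}(e/m)]\bigr)\, \Lbb^{-e}.
\]
Since $\Theta$ commutes with finite sums and with multiplication by powers of $\Lbb$, the left-hand side transforms into $\hl_m([(X,\varepsilon)])$ and the right-hand side into $\sum_{e \in \mathbb Z} \widetilde{\hl}_m([\varepsilon^{-1}(e/m)]) \Lbb^{-e}$, which is precisely the desired identity in $\mathscr M_{k,\loc}^{\hat{\mu}}$.

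The only point I would verify carefully, and the only conceivable obstacle, is that the sum on the right is genuinely finite, so that applying the ring morphism $\Theta$ term by term is legitimate. This is built into the definition of $\mu_\Gamma \VF^{\bdd}$: the definable form $\varepsilon$ is required to be bounded on $X$, so its image meets the discrete subset $(1/m)\mathbb Z \subset \Gamma$ in only finitely many points; for all other $e \in \mathbb Z$ the fiber $\varepsilon^{-1}(e/m)$ is empty and contributes zero under $\widetilde h_m$. With this finiteness in hand, the argument collapses to a purely functorial transport, exactly parallel to the way Proposition~\ref{HLmorph} (formulated for $h$) yields the corresponding statement for $\hl$ after composition with $\Theta$.
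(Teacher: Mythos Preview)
Your proposal is correct and matches the paper's own approach: the paper does not give a separate proof of Lemma~\ref{simp} but simply states it as the analogue of Lemma~\ref{simpleee} after composing with $\Theta$, which is exactly the functorial transport you describe. Your attention to the finiteness of the sum is appropriate and more explicit than anything the paper says; the paper treats both lemmas as immediate from the definitions.
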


\begin{proposition}\label{8May}
The formal series $Z(X,\varepsilon)(T):=\sum_{m\geq 1}\hl_m([(X,\varepsilon)])T^m$ is a rational function. Moreover, we have $\lim_{T\to\infty}Z(X,\varepsilon)(T)=-\hl([X])$.
\end{proposition}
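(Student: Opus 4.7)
The plan is to deduce the proposition directly from Proposition \ref{HLmorph} by pushing everything along the ring morphism $\Theta$. Since by construction $\hl_m=\Theta\circ h_m$ and $\hl=\Theta\circ h$, letting $\Theta$ act coefficient-wise on formal power series gives the key identity
\[
Z(X,\varepsilon)(T)=\Theta\bigl(Z'(X,\varepsilon)(T)\bigr),
\]
which is what I would use to transfer both the rationality and the limit statements in a single stroke.

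First I would invoke Proposition \ref{HLmorph} to write $Z'(X,\varepsilon)(T)$ as a finite $!K(\RES)[\Lbb^{-1}]_{\loc}$-linear combination of $1$ and products of the elementary fractions $\Lbb^{a}T^{b}/(1-\Lbb^{a}T^{b})$ with $(a,b)\in\mathbb Z\times\mathbb N_{>0}$. Since $\Theta$ is a ring morphism sending $\Lbb$ to $\Lbb$, applying it coefficient-wise turns this into a finite combination of the same shape with coefficients in $\mathscr M_{k,\loc}^{\hat\mu}$; this is exactly the rationality of $Z(X,\varepsilon)(T)$.

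For the limit, I would use that $\lim_{T\to\infty}$ on such rational functions is defined on generators by $\Lbb^{a}T^{b}/(1-\Lbb^{a}T^{b})\mapsto -1$ and extended linearly over the coefficient ring. Because $\Theta$ fixes $\Lbb$, it is compatible with these generators, so the limit operation commutes with $\Theta$. Combining this with the limit identity in Proposition \ref{HLmorph} yields
\[
\lim_{T\to\infty}Z(X,\varepsilon)(T)=\Theta\bigl(\lim_{T\to\infty}Z'(X,\varepsilon)(T)\bigr)=\Theta(-h([X]))=-\hl([X]).
\]

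The main (and really only) point requiring care is that the limit is well defined on rational functions modulo the relations among the generators $\Lbb^{a}T^{b}/(1-\Lbb^{a}T^{b})$, so that it genuinely descends along the ring morphism $\Theta$. This issue is treated by Denef--Loeser in \cite{DL4} and is already built into the proof of Proposition \ref{HLmorph}, so nothing new has to be verified here; once it is granted, the argument is a purely formal transfer, and no further computation beyond what is done for $h_m$ and $h$ in \cite{HL} is required.
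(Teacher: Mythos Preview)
Your proposal is correct and matches the paper's own treatment: the paper states Proposition~\ref{8May} immediately after noting that it is obtained ``similar to\ldots\ Proposition~\ref{HLmorph}'', without giving a separate argument. Your proof makes this explicit by pushing Proposition~\ref{HLmorph} forward along the ring morphism $\Theta$, which is exactly the intended formal transfer; nothing more is needed.
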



\subsection{Description of the motivic Milnor fibers}\label{subsec-descriptions} 
\subsubsection{Regular case}
Let $\gamma$ be in $\Gamma$. A definable subset $X$ of $\VF^{\ell}\times\RV^{\ell'}$ is {\it $\gamma$-invariant} if, for any $(x,x')\in\VF^{\ell}\times\RV^{\ell'}$ and any $(y,y')\in\VF^{\ell}\times\RV^{\ell'}$ with $\val(y)\geq\gamma$, both $(x,x')$ and $(x,x')+(y,y')$ simultaneously belong to either $X$ or the complement of $X$ in $\VF^{\ell}\times\RV^{\ell'}$. By \cite[Lemma 3.1.1]{HL}, any bounded definable subset of $\VF^{\ell}$ that is closed in the valuation topology is $\gamma$-invariant for some $\gamma$ in $\Gamma$. 

Assume that $X$ is a $\gamma$-invariant definable subset of $\VF^n\times \RV^{\ell'}$, where $\gamma$ is in $(1/m)\mathbb Z\subset \Gamma$. By \cite{HK}, $X(k((t^{1/m})))$ are the pullback of some definable subset $X[m;\gamma]$ of $\left( k[t^{1/m}]/t^{\gamma}\right)^n\times\RV^{\ell'}$ and the projection $X[m;\gamma]\to\VF^n$ is a finite-to-one map. If $\gamma'$ is in $\Gamma$ with $\gamma'\geq\gamma$, the equality $[X[m;\gamma']]=[X[m;\gamma]]\Lbb^{nm(\gamma'-\gamma)}$ holds in $!K(\vol\RES[n])$, thus $[X[m;\gamma]]\L^{-nm\gamma}$ in $!K(\RES)[\L^{-1}]$ is independent of the choice of $\gamma$ large enough. For brevity, we shall write $\widetilde{X}[m]$ for the quantity $[X[m;\gamma]]\L^{-nm\gamma+n}$ as well as for its image under $\Theta$.

\begin{proposition}\label{virignia}
\begin{itemize}
  \item[(i)] For $X$ as previous, $\widetilde\hl_m([X])=\loc(\widetilde{X}[m])$.
  \item[(ii)] Let $f$ be a nonzero function on a $d$-dimensional smooth connected $k$-variety $\mathscr X$, $\x$ a point of $f^{-1}(0)$. Let $\pi$ be the reduction map $\mathscr X(R)\to \mathscr X(\k)$. Set 
$$X:=\{x\in \mathscr X(R)\mid \pi(x)=\x, \rv(f(x))=\rv(t)\}.$$ 
Then $\hl([X])=\loc(\mathscr S_{f,\x})$.  
  \item[(iii)] For any $\gamma$ in $\Gamma$, $\hl([\gamma]_1)=1$ and $\hl(\overline{[\gamma]}_1)=\Lbb$. (Note that $[\gamma]_1$ and $\overline{[\gamma]}_1$ are the open and closed disks of valuative radius $\gamma$.)  
\end{itemize}  
\end{proposition}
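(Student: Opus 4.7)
All three assertions are verified by unwinding the construction of $h$, $\widetilde h_m$, and $h_m$ from Subsections \ref{HLconst}--\ref{4.4.2} through the Hrushovski--Kazhdan surjection $N$ (resp.\ $N_0$) and then transporting the result to $\mathscr M_{k,\loc}^{\hat\mu}$ via $\Theta$. The common pattern is to represent a class in $K(\mu_{\Gamma}\VF^{\bdd})$ (or in $K(\vol\VF^{\bdd})$) by a tensor in $K(\mu\Gamma^{\bdd})\otimes K(\mu_{\Gamma}\RES)$ and apply $\widetilde\alpha_m\otimes\beta_m$ (or $\alpha\otimes\beta$).

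For (i), the $\gamma$-invariance of $X$ lets one present $[X]$ from pure $\RES$-data: after base change to $k((t^{1/m}))$, $X$ is a disjoint union of translates of the polydisk $\{\val(y)\geq\gamma\}^n$, each coset being labelled by a unique point of $X[m;\gamma]\subset(k[t^{1/m}]/t^{\gamma})^n\times\RV^{\ell'}$. Thus $[X]$ is the image under $N_0$ of the tensor of $[X[m;\gamma]]$ with the standard polydisk of radius $\gamma$. Plugging into $\widetilde\alpha_m\otimes\beta_m$ produces exactly $[X[m;\gamma]]\L^{-nm\gamma+n}=\widetilde X[m]$, and applying $\Theta$ followed by the localization yields the stated identity.

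For (ii), I would apply (i) after the change of variable $s=t^{1/m}$ with the choice $\gamma=(m+1)/m\in(1/m)\mathbb Z$. Under this substitution, the defining conditions of $X$ read: $\varphi\in\mathscr X(k[[s]])$ with $\varphi(0)=\x$ and $f(\varphi(s))\equiv s^m\pmod{s^{m+1}}$. Truncating mod $s^{m+1}$ identifies $X[m;(m+1)/m]$, as a $\mu_m$-variety for the action $s\mapsto\zeta s$, with the arc space $\mathscr X_{\x,m}$. By (i), $\widetilde\hl_m([X])=\loc([\mathscr X_{\x,m}]\L^{-dm})$. Treating $X$ as an object of $\mu_{\Gamma}\VF^{\bdd}$ with trivial volume form, Lemma \ref{simp} collapses to $\hl_m([(X,0)])=\widetilde\hl_m([X])$, so $Z(X,0)(T)=\loc(Z_{f,\x}(T))$. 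Proposition \ref{8May} combined with $\mathscr S_{f,\x}=-\lim_{T\to\infty}Z_{f,\x}(T)$ then gives $\hl([X])=\loc(\mathscr S_{f,\x})$.

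For (iii), I would argue by direct decomposition. The closed disk $\overline{[\gamma]}_1$ is the pullback from $\vol\RES$ of the $\k$-reduction of the unit disk, namely $\mathbb A^1_{\k}$, which contributes $\beta([\mathbb A^1_{\k}])=\L$; hence $\hl(\overline{[\gamma]}_1)=\L$. Removing the ``sphere'' $\val^{-1}(\gamma)$, which pulls back from $\val_{\rv}^{-1}(\gamma)\cong\k^\times$ and contributes $\beta([\k^\times])=\L-1$, leaves $\hl([\gamma]_1)=\L-(\L-1)=1$. The main technical obstacle is really in part (i): carefully matching the $\gamma$-invariance of $X$ with the tensor decomposition through $N_0$, pinning down the normalization $\L^{-nm\gamma+n}$, and checking its independence of $\gamma$ large enough so that $\widetilde X[m]$ is well-defined in $!K(\RES)[\L^{-1}]_{\loc}$; once (i) is established, (ii) reduces to a change of variable and (iii) is a standard HL computation.
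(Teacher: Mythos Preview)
Your treatment of (i) and (ii) is essentially the paper's own argument. For (i) the paper simply cites \cite{HL}, and your sketch is a reasonable unwinding of that reference. For (ii) the paper also applies (i), choosing $\gamma=2$ and obtaining $X[m;2]\cong\mathscr X_{\x,m}\times\mathbb A_k^{(m-1)d}$; your choice $\gamma=(m+1)/m$ is slightly more economical but leads to the same identity $\widetilde\hl_m([X])=\loc([\mathscr X_{\x,m}]\Lbb^{-md})$, and the passage to $\hl$ via Proposition~\ref{8May} is identical.

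Your argument for (iii), however, has a genuine gap. You assert that the closed disk $\overline{[\gamma]}_1$ ``is the pullback from $\vol\RES$ of $\mathbb A^1_{\k}$''. This is only correct when $\gamma=0$, where indeed $\overline{[0]}_1=R=\res^{-1}(\k)$. For $\gamma\neq 0$ the classes $[\overline{[\gamma]}_1]$ and $[\overline{[0]}_1]$ are \emph{different} in $K(\vol\VF^{\bdd})$: the morphisms $\widetilde\hl_n$ separate them (for $\gamma=a/b$ in lowest terms one gets $\widetilde\hl_n(\overline{[\gamma]}_1)=\Lbb^{-ma+1}$ when $n=mb$ and $0$ otherwise, whereas $\widetilde\hl_n(\overline{[0]}_1)=\Lbb$ for all $n$). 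So you cannot read off $\hl(\overline{[\gamma]}_1)$ by declaring it to be $\beta([\mathbb A^1_\k])$. Your computation of $\hl([\val^{-1}(\gamma)])=\Lbb-1$ via $[V_\gamma]=[\k^\times]$ in $!K(\RES)$ is fine, but it does not rescue the closed-disk step.

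The paper's proof of (iii) avoids this issue entirely: it writes $\gamma=a/b$ with $(a,b)=1$, computes $\widetilde\hl_n$ of each disk directly from (i) (distinguishing whether $b\mid n$), and then takes $-\lim_{T\to\infty}$ of the resulting zeta series. That route is short and does not require identifying a tensor preimage under $N_0$.
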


\begin{proof}
(i) See Hrushovski-Loeser \cite{HL}.

(ii) We use \cite[Corollary 8.4.2]{HL} for proving (ii). Since $X$ is $2$-invariant (it is in fact $\gamma$-invariant for any $\gamma>1$ in $\Gamma$),  
\begin{align*}
X[m;2]=\left\{\varphi\in \mathscr X\Big(k[t^{1/m}]/(t^2)\Big)\mid \varphi(0)=\x, \rv(f(\varphi))=\rv(t)\right\}.
\end{align*}
The condition $\rv(f(\varphi))=\rv(t)$ is equivalent to $f(\varphi)\equiv t\mod t^{(m+1)/m}$, thus $X[m;2]$ is definably isomorphic via the map $t^{1/m}\mapsto t$ to 
$$\left\{\varphi\in \mathscr X\Big(k[t]/(t^{m+1})\Big)\mid \varphi(0)=\x, f(\varphi)\equiv t^m\mod t^{m+1}\right\}\times \mathbb A_k^{(m-1)d_1}.$$
We get $\widetilde\hl_m([X])=\loc([\mathscr X_{0,m}]\Lbb^{-md_1})$ and the conclusion follows.

(iii) Assume $\gamma=a/b$ with $a$, $b$ in $\mathbb Z$ and $(a,b)=1$. Then $\widetilde\hl_n([a/b]_1)=\Lbb^{-ma}$ if $n=mb$ and $\widetilde\hl_n([a/b]_1)=0$ otherwise, thus $\hl([a/b]_1)=1$. Also, $\widetilde\hl_n(\overline{[a/b]}_1)=\Lbb^{-ma+1}$ if $n=mb$ and $\widetilde\hl_{n}(\overline{[a/b]}_1)=0$ otherwise, thus $\hl(\overline{[a/b]}_1)=\Lbb$.
\end{proof}


\subsubsection{Formal case}
Let $X$ be a rigid $k((t))$-variety which is the generic fiber of a special formal $k[[t]]$-scheme $\mathfrak X$, let $\omega$ be a gauge form on $X$. We set $\overline{\mathfrak X}:=\mathfrak X\widehat{\otimes}_{k[[t]]}k[[t]]^{\alg}$ and $\overline{X}:=X\otimes_{k((t))}k((t))^{\alg}$. The integer-valued function $\ord_{\mathfrak X}(\omega)$ on $X$ was already recalled in (\ref{order}). Using the same way, one may define a rational-valued function $\ord_{\overline{\mathfrak X}}(\overline{\omega})$ on $\overline{X}$, where $\overline{\omega}$ is the pullback of $\omega$ via a the natural morphism $\overline{X}\to X$. We denote this rational-valued function by $\val_{\omega}$.

\begin{theorem}\label{comp}
Let $\mathfrak X$ be a relatively $d$-dimensional special formal $k[[t]]$-scheme with structural morphism $\mathfrak f$. Let $\overline{\mathfrak X}_{\eta,\rv}$ (resp. $\mathfrak X_{\eta}(m)_{\rv}$) be a version of $\overline{\mathfrak X}_{\eta}$ (resp. $\mathfrak X_{\eta}(m)$) in which $\mathfrak f_{\eta}(x)=t$ is replaced by $\rv\mathfrak f_{\eta}(x)=\rv(t)$ (resp. $\mathfrak f_{\eta}(x)\equiv t\mod t^{(m+1)/m}$). Then, for any gauge form $\omega$ on $\mathfrak X_{\eta}$, 
\begin{itemize}
  \item[(i)] $\hl_m([(\overline{\mathfrak X}_{\eta},\val_{\omega})])=\loc\Big(\Lbb^d\int_{\mathfrak X_{\eta}(m)}|\omega(m)|\Big)$,
  \item[(ii)] $\hl_m([(\overline{\mathfrak X}_{\eta,\rv},\val_{\omega})])=\loc\Big(\Lbb^d\int_{\mathfrak X_{\eta}(m)_{\rv}}|\omega(m)|\Big)$,
  \item[(iii)] $\hl([\overline{\mathfrak X}_{\eta,\rv}])=\hl([\overline{\mathfrak X}_{\eta}])$.
\end{itemize}
As a consequence, for a closed point $\x$ of $\mathfrak X_0$ and a gauge form $\omega'$ on $]\x[$, 
\begin{itemize}
  \item[(iv)] $\hl_m([(\overline{]\x[},\val_{\omega'})])=\loc\Big(\Lbb^d\int_{]\x[_m}|\omega'(m)|\Big)$,
  \item[(v)] $\hl_m([(\overline{]\x[}_{\rv},\val_{\omega'})])=\loc\Big(\Lbb^d\int_{]\x[_{m,\rv}}|\omega'(m)|\Big)$,
  \item[(vi)] $\hl(\overline{]\x[}_{\rv})=\hl(\overline{]\x[})=\loc\Big(\mathscr S_{\mathfrak f,\x}\Big)$.
\end{itemize}
\end{theorem}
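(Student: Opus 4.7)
The plan is to establish (i)--(iii) on $\mathfrak X$ itself and then deduce (iv)--(vi) by restricting to $\widehat{\mathfrak X}_\x$. For (i), I would first reduce to the stft case by choosing a N\'eron smoothening $\mathfrak Y \to \mathfrak X$; both sides are unchanged, since the integral is defined through $\mathfrak Y$ by \cite[Propositions 4.7, 4.8]{Ni2}, while on the model-theoretic side the open embedding $\mathfrak Y_\eta \hookrightarrow \mathfrak X_\eta$ is an essential bijection. In the stft case, I would expand the right-hand side via the definition recalled in (\ref{integral}), and the left-hand side via Lemma \ref{simp}, so that
$$\hl_m([(\overline{\mathfrak X}_\eta, \val_\omega)]) = \sum_{e\in\mathbb Z} \widetilde\hl_m([\val_\omega^{-1}(e/m)]) \Lbb^{-e}.$$
The two sums then match term by term under the identification $\val_\omega = (1/m)\,\ord_{\mathfrak X(m)}(\omega(m))$ on $\Gr(\mathfrak X(m))$, with Proposition \ref{virignia}(i) converting each $\widetilde\hl_m([\val_\omega^{-1}(e/m)])$ into the motivic measure of the corresponding stable cylinder; the prefactor $\Lbb^d$ absorbs the shift by the relative dimension.

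For (ii) the argument runs in parallel: the weaker condition $\rv\,\mathfrak f_\eta(x) = \rv(t)$ corresponds on the integral side to arcs $\varphi \in \Gr(\mathfrak X(m))$ with $\mathfrak f(\varphi)\equiv t \bmod t^{(m+1)/m}$, which is precisely the defining condition of $\mathfrak X_\eta(m)_{\rv}$, and the same level-set decomposition applies. For (iii), the cleanest approach is to note that $\overline{\mathfrak X}_{\eta,\rv}$ is a trivial open-disk bundle over $\overline{\mathfrak X}_\eta$, with fibre the open disk $\{t'\in \VF : \rv(t') = \rv(t)\}$; applying $\hl$ and invoking Proposition \ref{virignia}(iii) (which gives $\hl$ of any open disk equal to $1$) yields the identity. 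An alternative is to observe that Proposition \ref{8May} expresses both $\hl([\overline{\mathfrak X}_\eta])$ and $\hl([\overline{\mathfrak X}_{\eta,\rv}])$ as $-\lim_{T\to\infty}$ of rational functions whose coefficients are provided by (i) and (ii), and to check that these limits agree.

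Finally, (iv) and (v) are (i) and (ii) applied to $\widehat{\mathfrak X}_\x$ in place of $\mathfrak X$, using the canonical isomorphism $]\x[\,\cong (\widehat{\mathfrak X}_\x)_\eta$. The second equality of (vi), $\hl(\overline{]\x[}_{\rv})=\hl(\overline{]\x[})$, is (iii) for $\widehat{\mathfrak X}_\x$. For the first equality $\hl(\overline{]\x[})=\loc(\mathscr S_{\mathfrak f,\x})$, I would apply Proposition \ref{8May} to $(\overline{]\x[},\val_{\omega'})$, use (iv) to rewrite the coefficients of the resulting series, and match the $T\to\infty$ limit with Nicaise's formula $\mathscr S_{\mathfrak f,\x} = -\Lbb^d\lim_{T\to\infty} S(]\x[,\omega'; T)$ recalled in Section \ref{FormalTS}. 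The main obstacle I anticipate is the careful bookkeeping of the various $\Lbb$-twists and of the base change $k[[t]]\to k[[t^{1/m}]]$ when comparing the Greenberg measure $[\pi_\ell(A)]\Lbb^{-(\ell+1)d}$ with the Hrushovski-Kazhdan invariant $[X[m;\gamma]]\Lbb^{-nm\gamma+n}$; once this dictionary is set up correctly, each of the six assertions reduces to a level-set identification.
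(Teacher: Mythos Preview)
Your proposal is correct and essentially identical to the paper's approach. The paper proves only (i) in detail---expanding via Lemma~\ref{simp}, passing to a N\'eron smoothening $\mathfrak Y\to\mathfrak X$ (noting in fact that $\overline{\mathfrak X}_\eta=\overline{\mathfrak Y}_\eta$ since $k[[t]]$ is henselian, a bit stronger than your ``essential bijection''), applying Proposition~\ref{virignia}(i), and identifying the level sets of $\val_\omega$ with the fibers of $\ord_{\mathfrak Y(m)}(\omega(m))$---and then declares (ii)--(vi) to be direct consequences, which is exactly what you spell out.

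One small caution on your first approach to (iii): the natural structure is the map $\mathfrak f_\eta:\overline{\mathfrak X}_{\eta,\rv}\to t(1+\mathfrak m)$, a fibration \emph{over} the open disk with fiber $\overline{\mathfrak X}_\eta$ at $t$, not a bundle over $\overline{\mathfrak X}_\eta$; to get a measured trivialization you need smoothness of $\mathfrak X_\eta$ together with a Hensel-lemma argument, so it is not quite as immediate as ``trivial bundle'' suggests. Your alternative via Proposition~\ref{8May} avoids this and is the route consistent with the paper's ``direct consequence'' claim.
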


\begin{proof}
We prove (i). By Lemma \ref{simp},
\begin{align}\label{000}
\hl_m([(\overline{\mathfrak X}_{\eta},\val_{\omega})])=\sum_{e\in\mathbb Z}\widetilde\hl_m([\val_{\omega}^{-1}(e/m)])\Lbb^{-e}.
\end{align}
By \cite[Lemma 3.1.1]{HL}, for each $e$ in $\mathbb Z$, there exists a $\gamma_{e,m}$ in $\Gamma$ such that $\val_{\omega}^{-1}(e/m)$ is $\gamma_{e,m}$-invariant. Thus it follows from Proposition \ref{virignia}(i) that 
\begin{equation}\label{aaa}
\begin{aligned}
\widetilde\hl_m([\val_{\omega}^{-1}(e/m)])&=\loc\Big(\widetilde{\val_{\omega}^{-1}(e/m)}[m]\Big)\\
&=\loc\Big([\val_{\omega}^{-1}(e/m)[m;\gamma']]\Lbb^{-md\gamma'+d}\Big)
\end{aligned}
\end{equation}
for any $\gamma'\geq \gamma_{e,m}$ in $(1/m)\mathbb Z\subset\Gamma$. 

Let $\mathfrak Y\to\mathfrak X$ be a N\'eron smoothening for $\mathfrak X$ with $\mathfrak Y$ relatively $d$-dimensional $k[[t]]$-formal scheme topologically of finite type. It is obvious that $\overline{\mathfrak X}_{\eta}=\overline{\mathfrak Y}_{\eta}$ since $k[[t]]$ is henselian, so we can regard $\val_{\omega}$ as a function on $\overline{\mathfrak Y}_{\eta}$. As the function $\val_{\omega}$ induces from the gauge form $\omega$, thus $\val_{\omega}^{-1}(e/m)(m)$ is a stable cylinder in $\Gr(\mathfrak Y(m))$, moreover, $\Big(\ord_{\mathfrak Y(m)}(\omega(m))\Big)^{-1}(e)=\val_{\omega}^{-1}(e/m)(m)$. By definition of the measure $\mu$, cf. (\ref{measure}), we have
\begin{equation}\label{bbb}
\begin{aligned}
\mu\Big(\Big(\ord_{\mathfrak Y(m)}(\omega(m))\Big)^{-1}(e)\Big)&=\mu\Big(\val_{\omega}^{-1}(e/m)(m)\Big)\\
&=[\val_{\omega}^{-1}(e/m)[m;\gamma']]\Lbb^{-md\gamma'}
\end{aligned}
\end{equation}
for $\gamma'$ in $\mathbb N$ large enough. From (\ref{000}), (\ref{aaa}) and (\ref{bbb}), it follows that
\begin{align*}
\hl_m([(\overline{\mathfrak X}_{\eta},\val_{\omega})])&=\loc\Big(\Lbb^d\int_{\mathfrak Y_{\eta}(m)}|\omega(m)|\Big)=\loc\Big(\Lbb^d\int_{\mathfrak X_{\eta}(m)}|\omega(m)|\Big).
\end{align*}
This identity is also compatible with the canonical $\mu_m$-action by definition, thus it holds in $\mathscr M_{k,\loc}^{\hat{\mu}}$. The identities (ii)-(vi) are direct consequences of the first one.
\end{proof}

\begin{remark}
In \cite{Thuong3}, we define the {\it motivic nearby cycles} of a formal function $\mathfrak f$ and denote it by $\mathscr S_{\mathfrak f}$. This is a virtual variety in the Grothendieck ring $\mathscr M_{\mathfrak X_0}^{\hat{\mu}}$ of $\mathfrak X_0$-varieties with good $\hat{\mu}$-action. In the context of Theorem \ref{comp}, (iii), the quantity $\hl([\overline{\mathfrak X}_{\eta}])$ is nothing but $\loc\left(\int_{\mathfrak X_0}\mathscr S_{\mathfrak f}\right)$, where $\int_{\mathfrak X_0}$ is the forgetful (or pushforward) morphism $\mathscr M_{\mathfrak X_0}^{\hat{\mu}}\to \mathscr M_k^{\hat{\mu}}$.
\end{remark}


\section{A new proof for the motivic Thom-Sebastiani theorem}\label{SSregular}
In this section, we give a model-theoretic proof for Theorem \ref{thom-sebastiani} by using the morphisms of rings $\widetilde\hl_m$ and $\hl$. For notational simplicity, we let $f$ and $g$ be regular functions on $\mathbb A_k^{d_1}$ and $\mathbb A_k^{d_2}$, vanishing at their origins, respectively. Then, we shall prove that the following identity holds in $\mathscr M_{k,\loc}^{\hat{\mu}}$:
\begin{align}\label{neww}
\loc\left(\mathscr S_{f\oplus g,(0,0)}\right)=\loc\left(-\mathscr S_{f,0}* \mathscr S_{g,0}+\mathscr S_{f,0}+\mathscr S_{g,0}\right)
\end{align}

\subsection{Decomposition of the analytic Milnor fiber}
Consider the analytic Minor fiber of $f\oplus g$ at the origin of $\A_k^{d_1}\times\mathbb A_k^{d_2}$, 
$$Z:=\left\{(x,y)\in \m^{d_1+d_2}\mid \rv(f(x)+g(y))=\rv(t)\right\}.$$
This is a bounded $2$-invariant definable subset of $\VF^{d_1+d_2}$. By Proposition \ref{virignia}(ii), $\hl([Z])=\loc\left(\mathscr S_{f\oplus g, (0,0)}\right)$ in $\mathscr M_{ k,\loc}^{\hat{\mu}}$. Let us decompose $Z$ into a disjoint union of sets $X$, $Y$ and $Z^*$ subject to conditions $\val f(x)<\val g(y)$, $\val f(x)>\val g(y)$ and $\val f(x)=\val g(y)$, respectively. In the sequel, we are going to compute $\hl([X])$, $\hl([Y])$, $\hl([Z^*])$ and conclude.

Write $X=\{(x,y)\in \m^{d_1+d_2}\mid \rv(f(x))=\rv(t)\}$ as the product of the definable sets $X':=\{x\in \m^{d_1}\mid\rv f(x)=\rv(t)\}$ and $\m^{d_2}=[0]_1^{d_2}$. Proposition \ref{virignia}, the items (ii) and (iii), gives $\hl([X_1])=\loc\left(\mathscr S_{f,0}\right)$ and $\hl([\m^{d_2}])=1$, thus $\hl([X])=\loc\left(\mathscr S_{f,0}\right)$ in $\mathscr M_{k,\loc}^{\hat{\mu}}$. Similarly, we also have $\hl([Y])=\loc\left(\mathscr S_{g,0}\right)$ in $\mathscr M_{k,\loc}^{\hat{\mu}}$.

Set $Z_1^*=\{(x,y)\in Z^* \mid \val f(x)=1\}$, $Z_{<1}^*=\{(x,y)\in Z^* \mid 0<\val f(x)<1\}$, then $Z^*=Z_1^*\sqcup Z_{<1}^*$. For our goal we introduce the following definable set
\begin{align*}
Z_0:=\left\{(x,y)\in \m^{d_1+d_2}\mid \val(f(x)+g(y))>1, -\rv f(x)=\rv g(y)=\rv(t)\right\}.
\end{align*}
We shall consider the identity $[Z^*]=\left([Z_1^*]-[Z_0]\right)+\left([Z_{<1}^*]+[Z_0]\right)$ in $K(\vol\VF^{\bdd})$.

\begin{proposition}\label{-1-time}
For $m\geq 1$, the equality 
$$\widetilde\hl_m\left([Z_1^*]-[Z_0]\right)=-\loc\left([\mathscr X_{0,m}(f)]*[\mathscr X_{0,m}(g)]\L^{-m(d_1+d_2)}\right)$$
holds in $\mathscr M_{k,\loc}^{\hat{\mu}}$. Moreover, also in this ring $\mathscr M_{k,\loc}^{\hat{\mu}}$, we have
$$\hl\left([Z_1^*]-[Z_0]\right)=-\loc\left(S_{f,0}*S_{g,0}\right).$$
\end{proposition}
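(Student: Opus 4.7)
The strategy is to unpack $\widetilde\hl_m$ via Proposition~\ref{virignia}(i) and match the resulting $k$-varieties to the two Fermat summands in
\[
[\mathscr X_{0,m}(f)]*[\mathscr X_{0,m}(g)] = -[F_1^m\times^{\mu_m\times\mu_m}(\mathscr X_{0,m}(f)\times\mathscr X_{0,m}(g))] + [F_0^m\times^{\mu_m\times\mu_m}(\mathscr X_{0,m}(f)\times\mathscr X_{0,m}(g))].
\]
Both $Z_1^*$ and $Z_0$ are $\gamma$-invariant for any $\gamma>1$ (their defining conditions only constrain $f$ and $g$ modulo terms of valuation strictly greater than $1$), so Proposition~\ref{virignia}(i) applies with $\gamma' = 2$.

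For $Z_1^*$, I would unwind $Z_1^*[m;2]$ via the substitution $t^{1/m}\mapsto t$ used to define $\Theta$. Writing $s := t^{1/m}$, a point of $Z_1^*[m;2]$ is a pair $(\varphi,\psi)\in(k[s]/s^{2m})^{d_1+d_2}$ with $\varphi(0)=\psi(0)=0$, such that the $s^i$-coefficients of $f(\varphi)$ and $g(\psi)$ vanish for $i<m$, their $s^m$-coefficients $\alpha,\beta$ lie in $k^\times$, and $\alpha+\beta=1$. Parametrizing $\alpha = u^m$, $\beta = v^m$ with $(u,v)\in F_1^m$ and performing the reparametrization $\widetilde\varphi(s) := \varphi(u^{-1}s)$, $\widetilde\psi(s) := \psi(v^{-1}s)$, the conditions transform into $f(\widetilde\varphi)\equiv s^m\pmod{s^{m+1}}$ and $g(\widetilde\psi)\equiv s^m\pmod{s^{m+1}}$, so the truncations of $\widetilde\varphi,\widetilde\psi$ to $k[s]/s^{m+1}\cong k[t]/t^{m+1}$ define elements of $\mathscr X_{0,m}(f)$ and $\mathscr X_{0,m}(g)$ respectively, while the $s^i$-coefficients for $m+1\le i<2m$ remain unconstrained. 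The $(\mu_m\times\mu_m)$-ambiguity in the choice of the $m$-th roots $u,v$ is absorbed into the Fermat quotient, and the unconstrained coefficients form an affine factor $\mathbb A_k^{(m-1)(d_1+d_2)}$ whose linear $\mu_m$-action collapses to $\L^{(m-1)(d_1+d_2)}$ in $\mathscr M_k^{\hat\mu}$. Invoking Proposition~\ref{virignia}(i) then yields
\[
\widetilde\hl_m([Z_1^*]) = \loc\!\left([F_1^m\times^{\mu_m\times\mu_m}(\mathscr X_{0,m}(f)\times\mathscr X_{0,m}(g))]\,\L^{-m(d_1+d_2)}\right).
\]
A parallel calculation for $Z_0$, keyed on the cancellation condition $\alpha+\beta=0$ which gives $(u,v)\in F_0^m$, yields the analogous formula with $F_0^m$; subtracting and applying the convolution identity above proves the first claim.

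For the second identity, set $Z(T) := \sum_{m\ge 1}\widetilde\hl_m([Z_1^*]-[Z_0])T^m$; by Proposition~\ref{8May} (extended by additivity of $\hl_m$ to formal differences), $\hl([Z_1^*]-[Z_0]) = -\lim_{T\to\infty}Z(T)$. Combining with the first identity reduces the claim to
\[
\lim_{T\to\infty}\sum_{m\ge 1}([\mathscr X_{0,m}(f)]*[\mathscr X_{0,m}(g)])\L^{-m(d_1+d_2)}T^m = -\mathscr S_{f,0}*\mathscr S_{g,0},
\]
which follows from the rationality of the motivic zeta functions $Z_{f,0}(T)$ and $Z_{g,0}(T)$, preservation of rationality under level-wise convolution via the bilinearity of $*$, and the definitions $\mathscr S_{f,0}=-\lim_T Z_{f,0}(T)$ and $\mathscr S_{g,0}=-\lim_T Z_{g,0}(T)$.

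The main obstacle is the $\mu_m$-equivariant identification in the second paragraph: one must verify that the $\mu_m$-action on $Z_1^*[m;2]$ induced by $t^{1/m}\mapsto t$, namely $\xi\cdot(\varphi,\psi)=(\varphi(\xi s),\psi(\xi s))$, corresponds under the reparametrization to the diagonal $\mu_m$-action on the Fermat twist. Concretely $\xi\cdot\varphi(s) = \widetilde\varphi(\xi u s)$, which under the Fermat equivalence $(au,bv,x,y)\sim(u,v,ax,by)$ matches $\xi$ acting on $\widetilde\varphi$ via the standard arc-space action $\widetilde\varphi(t)\mapsto\widetilde\varphi(\xi t)$; this compatibility is a direct extension of Proposition~4.3.1 of \cite{HL} to the present situation.
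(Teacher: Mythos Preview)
Your proposal is correct and follows essentially the same route as the paper: both apply Proposition~\ref{virignia}(i) after noting $2$-invariance, pass to arcs in $k[t]/t^{2m}$ via the substitution $t^{1/m}\mapsto t$, identify the angular-component condition with $F_1^m$ (resp.\ $F_0^m$) through the reparametrization $\varphi(t)\mapsto\varphi(u^{-1}t)$, peel off the affine factor $\mathbb A_k^{(m-1)(d_1+d_2)}$, and then obtain the second identity from the Hadamard-product behaviour of $\lim_{T\to\infty}$. The paper packages the isomorphism as a pair of explicit inverse maps $\xi,\eta$ (Lemma~\ref{ICTP}) rather than your parametrize-then-quotient description, and leaves the $\mu_m$-equivariance implicit in the framework where you spell it out, but these are presentational differences only.
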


\begin{lemma}\label{ICTP}
The following hold in $\mathscr M_{k,\loc}^{\hat{\mu}}$:
\begin{itemize}
  \item[(i)] $\widetilde\hl_m([Z_1^*])=\loc\left([\mathscr X_{m,0}(f)\times \mathscr X_{m,0}(g)\times^{\mu_m\times \mu_m} F_1^m]\Lbb^{-m(d_1+d_2)}\right)$;
  \item[(ii)] $\widetilde\hl_m([Z_0])=\loc\left([\mathscr X_{m,0}(f)\times \mathscr X_{m,0}(g)\times^{\mu_m\times \mu_m} F_0^m]\Lbb^{-m(d_1+d_2)}\right)$.
\end{itemize}
\end{lemma}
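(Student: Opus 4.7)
The strategy is to apply Proposition~\ref{virignia}(i) to each side and then exhibit an explicit isomorphism of $\mu_m$-equivariant $k$-varieties whose class matches the right-hand side after accounting for the appropriate power of $\Lbb$. Since $Z_1^*$ and $Z_0$ are bounded and $\gamma$-invariant for any $\gamma \in (1/m)\mathbb Z$ with $\gamma > 1$, the proposition yields
$$\widetilde\hl_m([W]) = \loc\bigl([W[m;\gamma]]\,\Lbb^{-(d_1+d_2)m\gamma + (d_1+d_2)}\bigr)$$
for $W \in \{Z_1^*, Z_0\}$, so the problem reduces to computing the class of the explicit $k$-variety $W[m;\gamma]$ together with its canonical $\mu_m$-action, and matching it with $[\mathscr X_{m,0}(f) \times \mathscr X_{m,0}(g) \times^{\mu_m \times \mu_m} F_i^m] \cdot \Lbb^{(d_1+d_2)(m\gamma - m - 1)}$ in the relevant quotient of $K_0^{\hat\mu}(\Var_k)[\Lbb^{-1}]$.

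\textbf{Main construction.} A point of $W[m;\gamma]$ is a pair of truncated arcs $\varphi \in (k[t^{1/m}]/t^\gamma)^{d_1}$, $\psi \in (k[t^{1/m}]/t^\gamma)^{d_2}$ vanishing at the origin, with $\val f(\varphi) = \val g(\psi) = 1$ and leading residues $u_0 := \res(f(\varphi)/t)$, $v_0 := \res(g(\psi)/t) \in \k^\times$ satisfying $u_0 + v_0 = \epsilon_i$, where $\epsilon_1 = 1$ and $\epsilon_0 = 0$. I would pick $m$th roots $a, b \in \k^\times$ of $u_0, v_0$, placing $(a, b) \in F_i^m$, and perform the substitutions $\varphi(t^{1/m}) = \varphi'(a t^{1/m})$ and $\psi(t^{1/m}) = \psi'(b t^{1/m})$ to identify the leading block of the arc pair with $(\varphi', \psi') \in \mathscr X_{m,0}(f) \times \mathscr X_{m,0}(g)$, since the condition $f(\varphi'(s)) \equiv s^m \pmod{s^{m+1}}$ translates exactly to $f(\varphi) \equiv u_0 t \pmod{t^{(m+1)/m}}$. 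The coefficients of $\varphi, \psi$ in $t^{j/m}$-degrees $j > m$ are unconstrained, contributing an affine tail $\mathbb A_k^{N}$ with $N = (d_1+d_2)(m\gamma - m - 1)$.

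\textbf{Descent and compatibility.} The $m$th roots are ambiguous by a $(\mu_m \times \mu_m)$-action: $(a, b, \varphi', \psi')$ and $(\eta_1 a, \eta_2 b, \eta_1^{-1}\!\cdot\varphi', \eta_2^{-1}\!\cdot\psi')$, with $\eta \cdot \varphi'(s) := \varphi'(\eta s)$, produce the same $(\varphi, \psi)$. Thus the construction descends to an isomorphism of $k$-varieties
$$F_i^m \times^{\mu_m \times \mu_m}\bigl(\mathscr X_{m,0}(f) \times \mathscr X_{m,0}(g)\bigr) \times \mathbb A_k^{N} \;\xrightarrow{\sim}\; W[m;\gamma].$$
The canonical $\mu_m$-action on $W[m;\gamma]$ via $t^{1/m} \mapsto \zeta t^{1/m}$ matches, under this identification, the diagonal $\mu_m$-action on $F_i^m$ used in the balanced product, together with a product-scaling action on the tail $\mathbb A_k^N$. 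In the target ring this tail collapses to $\Lbb^N$ by the defining relation $[\mathbb G_m, \sigma] = [\mathbb G_m]$, so combining with the prefactor from Proposition~\ref{virignia}(i) leaves $\Lbb^{-m(d_1+d_2)}$ exactly, proving both (i) and (ii) after applying $\loc$.

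\textbf{Main obstacle.} The hard part will be organising the change-of-variable $s \leftrightarrow at^{1/m}$ as an honest morphism of $k$-varieties with correct $\mu_m$-equivariance rather than a set-theoretic bijection, and verifying that the $(\mu_m \times \mu_m)$-ambiguity in the $m$th-root choice descends cleanly together with the arc symmetry $\varphi'(s) \mapsto \varphi'(\eta s)$ to yield precisely the balanced product on the right-hand side. A parallel bookkeeping step is to track the tail $\mathbb A^N$ through the $!K$-relation carefully. Once this descent is established, the cases $i = 1$ and $i = 0$ are handled on the same footing, distinguished only by the scalar $\epsilon_i$ in the defining equation of $F_i^m$.
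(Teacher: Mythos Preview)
Your proposal is correct and follows essentially the same argument as the paper: both compute $W[m;\gamma]$ explicitly (the paper fixes $\gamma=2$), peel off an affine tail, and then exhibit mutually inverse morphisms between the remaining piece and the balanced product $\mathscr X_{0,m}(f)\times\mathscr X_{0,m}(g)\times^{\mu_m\times\mu_m}F_i^m$ via the substitution $t\mapsto at$ (resp.\ $t\mapsto bt$), with the inverse obtained by extracting $m$th roots of the angular components $\ac f(\varphi)$, $\ac g(\psi)$. One small correction: the tail $\mathbb A_k^N$ with its scaling $\mu_m$-action collapses to $\Lbb^N$ by the \emph{affine-bundle} relation already built into the definition of $K_0^{\hat\mu}(\Var_k)$, not by the relation $[\mathbb G_m,\sigma]=[\mathbb G_m]$ (which is the additional relation imposed in passing to $!K_0^{\hat\mu}$).
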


\begin{proof}
(i) Since $Z_1^*$ is $2$-invariant, we consider $Z_1^*[m;2]$ which equals 
\begin{align*}
&\left\{(\varphi,\psi)\in\left(\frac{k[t^{1/m}]}{t^2}\right)^{d_1+d_2}
\begin{array}{|l}
(\varphi(0),\psi(0))=(0,0), \val f(\varphi)=\val g(\psi)=1\\
f(\varphi)+g(\psi)\equiv t\mod t^{(m+1)/m}
\end{array}
\right\}\\
&\cong\left\{(\varphi,\psi)\in\left(\frac{tk[t]}{t^{2m}}\right)^{d_1+d_2}
\begin{array}{|l}
\ord f(\varphi)=\ord g(\psi)=m\\
f(\varphi)+g(\psi)\equiv t^m\mod t^{m+1}
\end{array}
\right\}\\
&\cong
\left\{(\varphi,\psi)\in\left(\frac{tk[t]}{t^{m+1}}\right)^{d_1+d_2}
\begin{array}{|l}
\ord f(\varphi)=\ord g(\psi)=m\\
f(\varphi)+g(\psi)\equiv t^m\mod t^{m+1}
\end{array}
\right\} \times \A_k^{(m-1)(d_1+d_2)}.
\end{align*}
We claim that there is a canonical isomorphism between 
$$V:=\left\{(\varphi,\psi)\in\left(\frac{tk[t]}{t^{m+1}}\right)^{d_1+d_2}
\begin{array}{|l}
\ord f(\varphi)=\ord g(\psi)=m\\
f(\varphi)+g(\psi)\equiv t^m\mod t^{m+1}
\end{array}
\right\}
$$
and
$$\mathscr X_{0,m}(f)\times \mathscr X_{0,m}(g)\times^{\mu_m\times \mu_m} F_1^m.$$
Indeed, we define a map $\mathscr X_{0,m}(f)\times \mathscr X_{0,m}(g)\times F_1^m\to V$ that sends $(\varphi(t),\psi(t); a,b)$ to $(\varphi(at),\psi(bt))$. It then induces a well defined morphism on the quotient 
$$\xi: \mathscr X_{0,m}(f)\times \mathscr X_{0,m}(g)\times^{\mu_m\times \mu_m} F_1^m\to V.$$
We also define a morphism 
$$\eta: V\to \mathscr X_{0,m}(f)\times \mathscr X_{0,m}(g)\times^{\mu_m\times \mu_m} F_1^m$$
given by $\eta(\varphi(t),\psi(t))=(\varphi((\ac f\varphi)^{-1/m}t),\psi((\ac g\psi)^{-1/m}t);(\ac f\varphi)^{1/m}, (\ac g\psi)^{1/m})$. It is clear that $\xi$ and $\eta$ are inverse of each other and the claim follows. Consequently, by Proposition \ref{virignia}(i), 
\begin{align*}
\widetilde\hl_m([Z_1^*])=\loc\left([\mathscr X_{0,m}(f)\times \mathscr X_{0,m}(g)\times^{\mu_m\times \mu_m} F_1^m]\L^{-m(d_1+d_2)}\right).
\end{align*}

(ii) Similarly as previous, since $Z_0$ is $2$-invariant, $Z_0[m;2]$ is isomorphic to
\begin{align*}
\left\{(\varphi,\psi)\in\left(\frac{tk[t]}{t^{m+1}}\right)^{d_1+d_2}
\begin{array}{|l}
\ord\left(f(\varphi)+g(\psi)\right)>m\\
-f(\varphi)\equiv g(\psi)\equiv t^m\mod t^{m+1}
\end{array}
\right\}\times \A_k^{(m-1)(d_1+d_2)}
\end{align*}
Also as above, we are able to prove that the constructible set 
$$\left\{(\varphi,\psi)\in\left(\frac{tk[t]}{t^{m+1}}\right)^{d_1+d_2}
\begin{array}{|l}
\ord\left(f(\varphi)+g(\psi)\right)>m\\
-f(\varphi)\equiv g(\psi)\equiv t^m\mod t^{m+1}
\end{array}
\right\}
$$
is isomorphic to $\mathscr X_{0,m}(f)\times \mathscr X_{0,m}(g)\times^{\mu_m\times \mu_m} F_0^m$, thus (ii) is proven.
\end{proof}

\begin{proof}[Proof of Proposition \ref{-1-time}]
By Lemma \ref{ICTP} and by definition of convolution product (cf. Subsection \ref{TSconv}) we get $\widetilde\hl_m\left([Z_1^*]-[Z_0]\right)=-\loc\left([\mathscr X_{0,m}(f)]*[\mathscr X_{0,m}(g)]\L^{-m(d_1+d_2)}\right)$. By a property of the Hadamard product, namely,
\begin{align*}
-&\lim_{T\to\infty}\sum_{m\geq 1}-[\mathscr X_{0,m}(f)]*[\mathscr X_{0,m}(g)]\L^{-m(d_1+d_2)}T^m\\
&=-\left(-\lim_{T\to\infty}\sum_{m\geq 1}[\mathscr X_{0,m}(f)]\L^{-md_1}T^m\right)*\left(-\lim_{T\to\infty}\sum_{m\geq 1}[\mathscr X_{0,m}(g)]\L^{-md_2}T^m\right)\\
&=-S_{f,0}*S_{g,0},
\end{align*}
we deduce that $\hl\left([Z_1^*]-[Z_0]\right)=-\loc\left(S_{f,0}*S_{g,0}\right)$ in $\mathscr M_{k,\loc}^{\hat{\mu}}$.
\end{proof}



\subsection{Integral over $\Gamma$}
Let $D$ be a definable subset of $\Gamma$. A function $\nu:D\to \mathscr M_{k,\loc}^{\hat{\mu}}$ is called {\it definable} if $D$ may be partitioned into finitely many disjoint definable subsets $D_i$, $i\in I$, such that $\nu|_{D_i}$ is constant $c_i\in \mathscr M_{k,\loc}^{\hat{\mu}}$ for every $i$ in $I$. Then, we define the integral of $\nu$ over $D$, which takes value in $\mathscr M_{k,\loc}^{\hat{\mu}}$, as follows
\begin{align*}
\int_{\gamma\in D}\nu(\gamma)=\int_{\gamma\in D}\nu(\gamma)d\chi:=\sum_{i\in I}c_i\chi(D_i).
\end{align*}
Here, $\chi$ is the o-minimal Euler characteristic defined in \cite[Lemma 9.5]{HK} followed by the localization morphism. 


\subsection{Completion of the proof of Theorem \ref{thom-sebastiani}}\label{subsec5.3}
In this subsection, we shall prove that $\hl\left([Z_{<1}^*]+[Z_0]\right)=0$ in $\mathscr M_{k,\loc}^{\hat{\mu}}$, thus finish the proof of (\ref{neww}). 

\subsubsection{Computation of $\hl([Z_{<1}^*])$ and $\hl([Z_0])$}
Let $\pi_{<1}$ denotes the definable function $Z_{<1}^*\to (0,1)\subset \Gamma$ mapping $(x,y)$ to $\val f(x)$, and let $\nu: (0,1)\to \mathscr M_{k,\loc}^{\hat{\mu}}$ be the function defined by 
\begin{align*}
\nu(\gamma)=\hl([\pi_{<1}^{-1}(\gamma)]). 
\end{align*}

\begin{lemma}\label{0-time}
The function $\nu$ is definable.
\end{lemma}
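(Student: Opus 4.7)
The plan is to show that $\nu$ is in fact constant on $(0,1)$, which trivially implies definability. The strategy exploits the factorization of $\hl$ through $K(\mu\Gamma^{\bdd})\otimes K(\mu_\Gamma\RES)$ coming from the Hrushovski-Kazhdan isomorphism together with $\alpha\otimes\beta$ (as recalled in Subsections \ref{HLconst} and \ref{action}). The key claim is that the class $[\pi_{<1}^{-1}(\gamma)]$ in $K(\mu_\Gamma\VF^{\bdd})$ can be written, modulo $\ker(N)$, in the form $[\{\gamma\}]\otimes[W]$, where $[\{\gamma\}]$ is the class of the singleton $\{\gamma\}$ in $K(\mu\Gamma^{\bdd})$ and $[W]\in K(\mu_\Gamma\RES)$ is independent of $\gamma$. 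Granting this, one obtains
$$\nu(\gamma)=\hl([\pi_{<1}^{-1}(\gamma)])=\Theta\bigl(\alpha([\{\gamma\}])\cdot\beta([W])\bigr)=(\Lbb-1)\cdot\Theta(\beta([W])),$$
independent of $\gamma$, using that $\alpha([\{\gamma\}])=\chi(\{\gamma\})(\Lbb-1)=\Lbb-1$ from the construction of $\alpha$ in Subsection \ref{4.4.2}.

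To establish the decomposition, I would introduce the change of variable $z:=f(x)$ and rewrite the defining conditions of $\pi_{<1}^{-1}(\gamma)$ as $\val z=\gamma$, $x\in f^{-1}(z)$ and $g(y)\in -z+\rv^{-1}(\rv(t))$ (valid because $\gamma<1=\val(t)$). This exhibits $\pi_{<1}^{-1}(\gamma)$ as a definable fibration over the sphere $S_\gamma:=\{z\in\VF:\val z=\gamma\}$ with fibers $f^{-1}(z)\times\{y:g(y)\in -z+\rv^{-1}(\rv(t))\}$. Trivializing the $\k^\times$-torsor $\val_{\rv}^{-1}(\gamma)$ by choosing a reference point $z_0\in S_\gamma$ and substituting $z\mapsto z/z_0$ pulls the $\gamma$-dependence entirely into the $\Gamma$-factor coming from $\val z=\gamma$, while the residual $\RES$-data (the fibers of $f$ and $g$ paired by the angular equation $\rv(f(x))+\rv(g(y))=0$ after normalization) is manifestly $\gamma$-independent. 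An alternative, more computational route would go via $\widetilde\hl_m$: for rational $\gamma\in(0,1)$ one computes $\widetilde\hl_m([\pi_{<1}^{-1}(\gamma)])$ through Proposition \ref{virignia}(i), isolates a factor equal to $\widetilde\hl_m([S_\gamma])$, and checks that the remaining factor has no $\gamma$-dependence; assembling over $m$ via Proposition \ref{8May}, this factor converges to $\hl([S_\gamma])=\Lbb-1$ (by Proposition \ref{virignia}(iii)), again giving the constancy of $\nu$.

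The main obstacle is verifying the decomposition rigorously inside the measured and equivariant categories. One must track the Jacobians of the change of variables $(x,y)\mapsto(z,x,y')$ so that the claimed identity already holds in $K(\mu_\Gamma\VF^{\bdd})$ (modulo $\ker N$), not merely at the level of the underlying definable sets, and one must ensure compatibility of the normalization $z\mapsto z/z_0$ with the lift of $\mu_m$-actions through the series $\{t_m\}$ of Subsection \ref{action}, so that the factorization descends to the equivariant Grothendieck ring and delivers the claimed identity in $\mathscr M_{k,\loc}^{\hat\mu}$.
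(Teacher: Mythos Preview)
Your proposal aims at the stronger statement that $\nu$ is constant on $(0,1)$. This is indeed true --- it follows a posteriori from the factorization $[\pi_{<1}^{-1}(\gamma)]=[\val^{-1}(\gamma)]\cdot[p_{<1,\gamma}^{-1}]$ together with Lemma~\ref{second-time} and $\hl([\val^{-1}(\gamma)])=\Lbb-1$. But the paper's proof of the present lemma is deliberately more modest and purely structural: it runs the Hrushovski--Kazhdan decomposition on the whole family $Z_{<1}^*\subset\m^{d_1+d_2}\times(0,1)$, obtaining $\RES$-pieces $W_i$ that are the same for every fiber and $\Gamma$-pieces $\Delta_i\subset\Gamma^i\times(0,1)$ whose slices at $\gamma$ are the $\Delta_{\gamma,i}$. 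Definability of $\nu$ then reduces to piecewise constancy of $\gamma\mapsto\chi(\Delta_{\gamma,i})$, which is o-minimality. No geometry specific to $f$ and $g$ enters at this stage.

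Your route, by contrast, does need that geometry, and the place where it enters is the assertion that ``the residual $\RES$-data \ldots is manifestly $\gamma$-independent''. That is precisely the content of Lemma~\ref{second-time}, whose proof is the explicit $\widetilde\hl_m$-computation you mention as your ``alternative route'' but do not carry out. The trivialization argument you offer in its place has a definability problem: a reference point $z_0\in S_\gamma$ is not available over $k((t))$ for general $\gamma$, so $z\mapsto z/z_0$ is not a $k((t))$-definable map and does not by itself exhibit a $\gamma$-independent class in $K(\mu_\Gamma\RES)$. In short, your proposal merges Lemmas~\ref{0-time} and~\ref{second-time} into one step but leaves the harder half open; if you supply the computation of Lemma~\ref{second-time}, the reorganization is valid and even streamlines matters (Corollary~\ref{first-time} then reduces to $\chi((0,1))=-1$), but as it stands it is not a proof of the lemma.
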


\begin{proof}
Via the definable bijection $(x,y)\mapsto (x,y,\val f(x))$, we may regard $Z_{<1}^*$ as a definable subset of $\m^{d_1+d_2}\times (0,1)$. Consider the surjective morphism of rings 
$$N_0:K(\vol\Gamma^{\bdd})\otimes K(\vol\RES)\to K(\vol\VF^{\bdd})$$
induced by $N$ in (\ref{morphismN}). There exist definable subsets $W_i$ of $\RES^i$ and $\Delta_i$ of $\Gamma^i\times (0,1)$, $0\leq i\leq d_1+d_2$, with $N_0\left(\sum_{i=0}^{d_1+d_2}[\Delta_i]\otimes [W_{d_1+d_2-i}]\right)=[Z_{<1}^*]$. By definition of $\alpha$, $\beta$ (cf. \ref{4.4.2}), $(\alpha\otimes\beta)\left(\sum_{i=0}^{d_1+d_2}[\Delta_i]\otimes [W_{d_1+d_2-i}]\right)=\sum_{i=0}^{d_1+d_2}\chi(\Delta_i)w_{d_1+d_2-i}$, where $w_{d_1+d_2-i}:=[W_{d_1+d_2-i}](\Lbb-1)^i$. Similarly, for $\gamma\in(0,1)$, there are definable subsets $W_{\gamma,i}$ of $\RES^i$, $\Delta_{\gamma,i}$ of $\Gamma^i\times \{\gamma\}$ with $N_0\left(\sum_{i=0}^{d_1+d_2}[\Delta_{\gamma,i}]\otimes [W_{\gamma,d_1+d_2-i}]\right)=[\pi_{<1}^{-1}(\gamma)]$. Also, $(\alpha\otimes\beta)\left(\sum_{i=0}^{d_1+d_2}[\Delta_{\gamma,i}]\otimes [W_{\gamma,d_1+d_2-i}]\right)=\sum_{i=0}^{d_1+d_2}\chi(\Delta_{\gamma,i})w_{\gamma,d_1+d_2-i}$, where $w_{\gamma,d_1+d_2-i}:=[W_{\gamma,d_1+d_2-i}](\Lbb-1)^i$. We claim that $w_i=w_{\gamma,i}$ in $!K(\RES)$. Indeed, the image of $W_i$ (resp. $W_{\gamma,i}$) in $K(\vol\VF^{\bdd})$ is $[W_i\times_{\ell,\res}R^i]$ (resp. $[W_{\gamma,i}\times_{\ell_{\gamma},\res}R^i]$), where $\ell:W_i\to\k^i$ and $\ell_{\gamma}:W_{\gamma,i}\to\k^i$ are \'etale maps, $R=\{\tau\in\VF\mid \val(\tau)\geq 0\}$. The unique difference between $W_i\times_{\ell,\res}R^i$ and $W_{\gamma,i}\times_{\ell_{\gamma},\res}R^i$ is that the former admits the condition $0<\val f(x)<1$ while the latter satisfies $\val f(x)=\gamma$. Thus $[W_i]=[W_{\gamma,i}]$ in $!K(\RES)$. Consequently, $(\alpha\otimes\beta)\left(\sum_{i=0}^{d_1+d_2}[\Delta_{\gamma,i}]\otimes [W_{\gamma,d_1+d_2-i}]\right)=\sum_{i=0}^{d_1+d_2}\chi(\Delta_{\gamma,i})w_{d_1+d_2-i}$. Since $\hl$ induces from $\alpha\otimes \beta$, we have the following
\begin{align}\label{godent}
\hl([Z_{<1}^*])=\sum_{i=0}^{d_1+d_2}\chi(\Delta_i)\Theta_i,\quad \hl([\pi_{<1}^{-1}(\gamma)])=\sum_{i=0}^{d_1+d_2}\chi(\Delta_{\gamma,i})\Theta_i,
\end{align}
where $\Theta_i:=\Theta(w_{d_1+d_2-i})$.

For $0\leq i\leq d_1+d_2$, we identify $\Gamma^i\times(0,1)$ with a subset of $\Gamma^{d_1+d_2}\times (0,1)$ in an obvious manner. Let $\pr_2$ be the second projection $\Gamma^{d_1+d_2}\times (0,1)\to (0,1)$ and $D_i:=\pr_2(\Delta_i)$. Then $(0,1)=\bigsqcup_{i=0}^{d_1+d_2}D_i$. Moreover, for any $\gamma$ in $(0,1)$, $\Delta_{\gamma,i}$ is a fiber of the definable map $\Delta_i\to D_i$, all the fibers of this map are definably isomorphic. Thus $\chi(\Delta_i)=\chi(D_i)\chi(\Delta_{\gamma,i})$. It and (\ref{godent}) show that, on $D_i$,
\begin{align}\label{Vienna2}
\nu(\gamma)=\hl([\pi_{<1}^{-1}(\gamma)])=\sum_{i=0}^{d_1+d_2}\chi(\Delta_i)\chi(D_i)^{-1}\Theta_i,
\end{align}
which proves the definability.
\end{proof}

\begin{corollary}\label{first-time}
$\int_{\gamma\in (0,1)}\hl([\pi_{<1}^{-1}(\gamma)])=\hl([Z_{<1}^*])$.
\end{corollary}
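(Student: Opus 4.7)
The plan is to combine the explicit formulas for $\hl([Z_{<1}^*])$ and for $\nu(\gamma)=\hl([\pi_{<1}^{-1}(\gamma)])$ produced in the course of proving Lemma~\ref{0-time} with the definition of $\int_{\gamma \in D}\nu(\gamma)\,d\chi$ given just above. No new model-theoretic ingredient should be needed, so the argument reduces to bookkeeping with the data $\Delta_i$, $D_i$, $\Theta_i$ introduced there.

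Concretely, I would recall the identities
$$\hl([Z_{<1}^*])=\sum_{i=0}^{d_1+d_2}\chi(\Delta_i)\Theta_i,\qquad \hl([\pi_{<1}^{-1}(\gamma)])=\sum_{i=0}^{d_1+d_2}\chi(\Delta_{\gamma,i})\Theta_i,$$
where $(0,1)=\bigsqcup_{i=0}^{d_1+d_2}D_i$ with $D_i=\pr_2(\Delta_i)$ and $\Delta_{\gamma,i}$ the fiber of $\Delta_i \to D_i$ over $\gamma$. Since the $D_i$ are disjoint and all fibers of $\Delta_i \to D_i$ are pairwise definably isomorphic, $\nu|_{D_i}$ takes the constant value $\chi(\Delta_{\gamma,i})\Theta_i$ (which re-confirms the definability granted by Lemma~\ref{0-time}). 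The defining formula of the integral then gives
$$\int_{\gamma \in (0,1)}\nu(\gamma)\,d\chi=\sum_{i=0}^{d_1+d_2}\chi(D_i)\,\chi(\Delta_{\gamma,i})\,\Theta_i.$$
Applying the multiplicativity $\chi(\Delta_i)=\chi(D_i)\chi(\Delta_{\gamma,i})$ term by term then yields $\sum_{i}\chi(\Delta_i)\Theta_i=\hl([Z_{<1}^*])$, which is the desired identity.

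The only substantive ingredient is the multiplicativity $\chi(\Delta_i)=\chi(D_i)\chi(\Delta_{\gamma,i})$ for the o-minimal Euler characteristic in the sense of \cite[Lemma 9.5]{HK}, applied to the definable surjection $\Delta_i \to D_i$ whose fibers are pairwise definably isomorphic. This is standard, proved by a definable cell decomposition of $D_i$ over which the family trivializes, so no genuine obstacle is expected; the corollary should close in a handful of lines once Lemma~\ref{0-time} is in hand.
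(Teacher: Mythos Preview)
Your proposal is correct and is essentially identical to the paper's own proof: the paper uses the formulas (\ref{godent}) and (\ref{Vienna2}) from Lemma~\ref{0-time}, applies the definition of $\int_{\gamma\in(0,1)}\nu(\gamma)$ as $\sum_i\nu|_{D_i}\chi(D_i)$, and collapses this via the multiplicativity $\chi(\Delta_i)=\chi(D_i)\chi(\Delta_{\gamma,i})$ to $\sum_i\chi(\Delta_i)\Theta_i=\hl([Z_{<1}^*])$. The only cosmetic difference is that the paper writes the constant value of $\nu|_{D_i}$ as $\chi(\Delta_i)\chi(D_i)^{-1}\Theta_i$ rather than $\chi(\Delta_{\gamma,i})\Theta_i$, which is the same thing.
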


\begin{proof}
By definition as well as by (\ref{godent}) and (\ref{Vienna2}),  
\begin{align*}
\int_{\gamma\in (0,1)}\nu(\gamma)=\sum_{i=0}^{d_1+d_2}\nu|_{D_i}\chi(D_i)=\sum_{i=0}^{d_1+d_2}\chi(\Delta_i)\chi(D_i)^{-1}\Theta_i\chi(D_i)=\hl([Z_{<1}^*]).
\end{align*}
\end{proof}

Let $\pi_0$ be the function $Z_0\to (1,\infty)\subset \Gamma$ that sends $(x,y)$ to $\val(f(x)+g(y))$. Similarly as previous, we are able to prove the following 

\begin{corollary}
The function $(1,\infty)\to \mathscr M_{k,\loc}^{\hat{\mu}}$ given by $\hl([\pi_0^{-1}(\gamma)])$ is definable, moreover,
$\int_{\gamma\in (1,\infty)}\hl([\pi_0^{-1}(\gamma)])=\hl([Z_0])$.
\end{corollary}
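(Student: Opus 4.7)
The plan is to run the proof of Lemma \ref{0-time} and Corollary \ref{first-time} essentially verbatim, with $(0,1)$ replaced by $(1,\infty)$ and $\pi_{<1}$ replaced by $\pi_0$. The key observation is that no feature of the interval $(0,1)$ was actually used in the previous argument beyond the fact that it is a definable subset of $\Gamma$; the o-minimal Euler characteristic $\chi$ from \cite[Lemma 9.5]{HK} is defined on arbitrary definable subsets of $\Gamma^n$, and $Z_0$ lies in $\m^{d_1+d_2}$ so the associated $\Gamma$-objects are bounded below and live in $\vol\Gamma^{\bdd}$.

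First, I would use the definable bijection $(x,y)\mapsto (x,y,\val(f(x)+g(y)))$ to view $Z_0$ as a definable subset of $\m^{d_1+d_2}\times(1,\infty)$. Applying surjectivity of
\begin{equation*}
N_0:K(\vol\Gamma^{\bdd})\otimes K(\vol\RES)\to K(\vol\VF^{\bdd}),
\end{equation*}
I obtain definable subsets $W_i\subset\RES^i$ and $\Delta_i\subset\Gamma^i\times(1,\infty)$, $0\leq i\leq d_1+d_2$, such that $N_0\bigl(\sum_i[\Delta_i]\otimes[W_{d_1+d_2-i}]\bigr)=[Z_0]$. For each $\gamma\in(1,\infty)$, the same procedure applied to the fiber $\pi_0^{-1}(\gamma)$ produces $W_{\gamma,i}\subset\RES^i$ and $\Delta_{\gamma,i}\subset\Gamma^i\times\{\gamma\}$.

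Next I would argue, exactly as in the proof of Lemma \ref{0-time}, that $[W_i]=[W_{\gamma,i}]$ in $!K(\RES)$: the representatives $W_i\times_{\ell,\res}R^i$ and $W_{\gamma,i}\times_{\ell_\gamma,\res}R^i$ in $K(\vol\VF^{\bdd})$ differ only by the $\VF$-side condition ($\val(f(x)+g(y))>1$ versus $\val(f(x)+g(y))=\gamma$), which does not affect the $\RES$-part after passing to $!K(\RES)$. Setting $\Theta_i:=\Theta(w_{d_1+d_2-i})$ with $w_j:=[W_j](\Lbb-1)^{d_1+d_2-j}$, this yields
\begin{equation*}
\hl([Z_0])=\sum_{i=0}^{d_1+d_2}\chi(\Delta_i)\Theta_i,\qquad \hl([\pi_0^{-1}(\gamma)])=\sum_{i=0}^{d_1+d_2}\chi(\Delta_{\gamma,i})\Theta_i.
\end{equation*}
Letting $\pr_2:\Gamma^{d_1+d_2}\times(1,\infty)\to(1,\infty)$ and $D_i:=\pr_2(\Delta_i)$, the intervals $D_i$ partition $(1,\infty)$, the fibers of $\Delta_i\to D_i$ are all definably isomorphic to $\Delta_{\gamma,i}$, and hence $\chi(\Delta_i)=\chi(D_i)\chi(\Delta_{\gamma,i})$. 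Thus $\nu(\gamma):=\hl([\pi_0^{-1}(\gamma)])$ is constant equal to $\sum_i\chi(\Delta_i)\chi(D_i)^{-1}\Theta_i$ on each $D_i$, proving definability, and integrating gives $\int_{\gamma\in(1,\infty)}\nu(\gamma)=\sum_i\chi(\Delta_i)\Theta_i=\hl([Z_0])$.

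There is no real obstacle beyond checking that the earlier formalism goes through when the base interval is unbounded above. The only point requiring a moment of care is the well-definedness of the $\chi(D_i)^{-1}$ factors, but this is guaranteed by o-minimality of $\Gamma$ together with the fact that $\Delta_i,\Delta_{\gamma,i}$ are bounded below (since $Z_0\subset\m^{d_1+d_2}$), so every set in sight lives in the bounded categories where $\alpha$, $\beta$, and hence $\hl$ are defined; the argument of Lemma \ref{0-time} therefore transfers without modification.
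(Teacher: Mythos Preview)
Your proposal is correct and matches the paper's own approach: the paper simply asserts ``Similarly as previous, we are able to prove the following'' and gives no further details, so you have faithfully spelled out the intended argument by transporting the proofs of Lemma \ref{0-time} and Corollary \ref{first-time} from $(0,1)$ to $(1,\infty)$. Your remark about boundedness below (so that everything stays in $\vol\Gamma^{\bdd}$) is the only extra point worth noting, and you have handled it correctly.
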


\subsubsection{Conclusion}
Let $A$ be the annulus $\{\tau\in \VF\mid 0<\val(\tau)<1\}$ and $p_{<1}$ the function $Z_{<1}^*\to A$ mapping $(x,y)$ to $f(x)$. Then $\pi_{<1}=p_{<1}\circ\val$. The fiber over $\tau\in A$ of $p_{<1}$ is the following    
\begin{align}\label{Vienna3}
p_{<1}^{-1}(\tau)=\left\{(x,y)\in \m^{d_1+d_2}\mid f(x)=\tau, g(y)=-\tau+t\right\}.
\end{align}
As for each $\gamma$ in $(0,1)$, all the fibers $p_{<1}^{-1}(\tau)$, $\tau$ in $\val^{-1}(\gamma)$, are definably isomorphic, since the description (\ref{Vienna3}), it implies that the equalities
$$[\pi_{<1}^{-1}(\gamma)]=\int_{\tau\in \val^{-1}(\gamma)}[p_{<1}^{-1}(\tau)]=[\val^{-1}(\gamma)][p_{<1,\gamma}^{-1}]$$
hold in $K(\vol\VF^{\bdd})$, where $[p_{<1,\gamma}^{-1}]$ is the constant function $[p_{<1}^{-1}(\tau)]$ on $\val^{-1}(\gamma)$. By Corollary \ref{first-time},
\begin{align}\label{Vienna4}
\hl([Z_{<1}^*])=\int_{\gamma\in (0,1)}\hl([\pi_{<1}^{-1}(\gamma)])=\int_{\gamma\in (0,1)}\hl([\val^{-1}(\gamma)])\hl([p_{<1,\gamma}^{-1}]).
\end{align}

\begin{lemma}\label{second-time}
$\hl([p_{<1,\gamma}^{-1}])$ is independent of the choice of $\gamma$ in $(0,1)$.
\end{lemma}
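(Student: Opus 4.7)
The plan is to refine the decomposition used in Lemma \ref{0-time} and show that both the $\Gamma$- and $\RES$-contributions to $\hl([p_{<1,\gamma}^{-1}])$ are individually independent of $\gamma\in(0,1)$. To set up, fix any $\tau_0\in A$ with $\val(\tau_0)=\gamma$; the identity (\ref{Vienna3}) expresses $p_{<1}^{-1}(\tau_0)$ as the product $\{x\in\m^{d_1}:f(x)=\tau_0\}\times\{y\in\m^{d_2}:g(y)=-\tau_0+t\}$, and since $\gamma<1=\val(t)$ one has $\val(-\tau_0+t)=\gamma$, so both factors are ``level-$\gamma$'' slices of $f$ and $g$ respectively.

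First I would apply the surjectivity of $N_0$ together with Remark \ref{rk1}, uniformly in the parameter $\tau_0\in A$, to produce a decomposition
\begin{align*}
[p_{<1,\gamma}^{-1}]=\sum_{i=0}^{d_1+d_2}[\Delta_{\gamma,i}]\otimes[W_{\gamma,i}]
\end{align*}
modulo $\ker N_0$, with $\Delta_{\gamma,i}\subset\Gamma^i$ and $W_{\gamma,i}\subset\RES^{d_1+d_2-i}$, in such a way that the collections $\{\Delta_{\gamma,i}\}_{\gamma}$ and $\{W_{\gamma,i}\}_{\gamma}$ are the fibers of definable families over $(0,1)$. Applying $\hl=\Theta\circ(\alpha\otimes\beta)$ with $\alpha([\Delta])=\chi(\Delta)(\Lbb-1)^i$ and $\beta([W])=[W]$ then yields
\begin{align*}
\hl([p_{<1,\gamma}^{-1}])=\sum_{i=0}^{d_1+d_2}\chi(\Delta_{\gamma,i})\,\Theta([W_{\gamma,i}])\,(\Lbb-1)^i.
\end{align*}

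Next, I would argue, exactly as in the proof of Lemma \ref{0-time} (the claim ``$w_i=w_{\gamma,i}$ in $!K(\RES)$''), that the classes $[W_{\gamma,i}]$ are independent of $\gamma$ in $!K(\RES)$: the $\RES$-pieces record only residue-level information, and the sole distinguishing ingredient as $\gamma$ varies is a $\VF$-condition on $\val(f(x))$, which is precisely what is killed in $!K(\RES)$. To conclude, I would verify that $\chi(\Delta_{\gamma,i})$ is constant in $\gamma\in(0,1)$: since $\{\Delta_{\gamma,i}\}_{\gamma}$ is a definable family over the definably connected base $(0,1)\subset\Gamma$ and $\Gamma$ is a divisible ordered abelian group (hence o-minimal), the o-minimal Euler characteristic of the fibers is constant across such a family by the standard trivialization result.

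I expect the main obstacle to lie in establishing the uniform decomposition in the first step: while the surjectivity of $N_0$ yields a decomposition of each individual fiber, care is needed to produce a coherent definable family version (analogous to what is done in Lemma \ref{0-time} for $[Z_{<1}^*]$), so that the pieces $\Delta_{\gamma,i}$ and $W_{\gamma,i}$ are genuinely fibers of fixed definable objects rather than disparate choices for each $\gamma$. Once this is arranged, combining the two constancy claims gives $\hl([p_{<1,\gamma}^{-1}])=\hl([p_{<1,\gamma'}^{-1}])$ for all $\gamma,\gamma'\in(0,1)$, completing the proof.
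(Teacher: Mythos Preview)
Your approach is genuinely different from the paper's and is plausible in outline, but the justification you give for the $\RES$-constancy step is not quite right.

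The paper proceeds by direct computation rather than through the $N_0$-decomposition. Using the product description (\ref{Vienna3}) it reduces to a single factor: showing $\hl([\{x\in\m^d:\rv f(x)=\rv(t^{\gamma})\}])$ is independent of $\gamma$. For $\gamma=a/b$ in lowest terms it then computes $\widetilde\hl_m$ explicitly via arc spaces: the quantity vanishes unless $b\mid m$, and for $m=bs$ one has
\[
\widetilde\hl_{bs}\bigl([\{x:\rv f(x)=\rv(t^{a/b})\}]\bigr)=[\mathscr X_{0,as}(f)]\Lbb^{-asd}=\widetilde\hl_{as}\bigl([\{x:\rv f(x)=\rv(t)\}]\bigr),
\]
from which the equality of $\hl$-values follows. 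This ties directly into the arc-space description already used in Proposition~\ref{virignia} and Lemma~\ref{ICTP}.

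Your structural route via $\alpha\otimes\beta$ and o-minimal constancy of $\chi$ avoids the explicit arc computation, but your appeal to Lemma~\ref{0-time} for the $\RES$ step is misplaced. In that lemma the comparison is between the \emph{total} family $Z_{<1}^*$ and a \emph{single} fiber $\pi_{<1}^{-1}(\gamma)$: they share the same $\RES$-lift by construction, differing only in a $\Gamma$-constraint on $\val f(x)$. Here you are comparing two \emph{different} fibers $p_{<1,\gamma}^{-1}$ and $p_{<1,\gamma'}^{-1}$, and the distinguishing condition is $f(x)=\tau$ versus $f(x)=\tau'$, a genuine $\VF$-equation whose $\RES$-shadow lives in varying levels $V_{\gamma}$ --- not merely ``a $\VF$-condition on $\val(f(x))$''. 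What would make your argument go through is that, once a uniform family decomposition is in hand, the $W_{\gamma,i}$ differ only by the ambient grading pieces $V_{\gamma_j}$, and the relations $[\val_{\rv}^{-1}(a)]=[\val_{\rv}^{-1}(0)]$ defining $!K(\RES)$ trivialize exactly these; but this is a separate observation, not the one from Lemma~\ref{0-time}. The paper's computational route sidesteps this issue entirely.
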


\begin{proof}
Using (\ref{Vienna3}), namely, 
$$p_{<1}^{-1}(\tau)=\left\{x\in \m^{d_1}\mid f(x)=\tau\right\}\times \left\{y\in\mathfrak m^{d_2}\mid g(y)=-\tau+t \right\},$$ 
it suffices to prove $\hl([\{x\in \m^d\mid f(x)=t^{\gamma}\}])=\hl([\{x\in \m^d\mid f(x)=t\}])$ for any regular function $f:\A_k^d\to \A_k^1$ vanishing at the origin of $\A_k^d$ and for any $\gamma$ in $(0,1)$. 
Equivalently, it suffices to prove $\hl([\{x\in \m^d\mid \rv f(x)=\rv(t^{\gamma})\}])=\hl([\{x\in \m^d\mid \rv f(x)=\rv(t)\}])$ for $\gamma=a/b$ in $(0,1)$ with $a$ and $b$ coprime integers, $a<b$. Indeed, if $m$ is not divisible by $b$, then $\hl_m([\{x\in \m^d\mid \rv f(x)=\rv(t^{a/b})\}])=0$. Otherwise, say, $m=bs$, then  
\begin{align*}
\widetilde\hl_{bs}([\{x\in \m^d\mid \rv f(x)=\rv(t^{a/b})\}])=\widetilde\hl_{as}([\{x\in \m^d\mid \rv f(x)=\rv(t)\}]),
\end{align*}
because, by a simple geometric computation, both sides are equal to $[\mathscr X_{0,as}(f)]\L^{-asd}$. This equality then implies the lemma.
\end{proof}

Using (\ref{Vienna4}) and Lemma \ref{second-time}, we get $\hl([Z_{<1}^*])=\left(\int_{\gamma\in (0,1)}\hl([\val^{-1}(\gamma)])\right)\hl([p_{<1,\gamma}^{-1}])$. Similarly as in the proofs of Lemma \ref{0-time} and Corollary \ref{first-time}, we can easily show that $\int_{\gamma\in (0,1)}\hl([\val^{-1}(\gamma)])=\hl([A])=-1$. Thus 
\begin{align}\label{Vienna5}
\hl([Z_{<1}^*])=-\hl([p_{<1,\gamma}^{-1}]) \quad (\gamma\in (0,1)).
\end{align} 

Denote by $B$ the set $\{\tau\in \VF\mid \val(\tau)>1\}$ and consider the function $p_0: Z_0\to B$ defined by $p_0(x,y)=f(x)+g(y)$. Then, we have $\pi_0=p_0\circ\val$, moreover, the fiber over $\tau\in B$ of $p_0$ equals 
\begin{align*}
p_0^{-1}(\tau)&=\left\{(x,y)\in \m^{d_1+d_2} \mid f(x)+g(y)=\tau, -\rv f(x)=\rv g(y)=\rv(t)\right\}\\
&=\left\{(x,y)\in \m^{d_1+d_2} \mid f(x)=ct, g(y)=-ct+\tau, c \in 1+\mathfrak m\right\}.
\end{align*}
Similarly as in the proof of Lemma \ref{second-time}, we can show that $\hl([p_0^{-1}(\tau)])$ is independent of $\tau \in B$ and, moreover, that
\begin{align}\label{Vienna9}
\hl([p_{<1,\gamma}^{-1}])=\hl([p_0^{-1}(\tau)])
\end{align}
for any $\gamma$ in $(0,1)$ and any $\tau$ in $B$. An analogue of Lemma \ref{0-time} and Corollary \ref{first-time} gives rise to the formula
\begin{align}\label{Vienna8}
\hl([Z_0])=\hl([B])\hl([p_0^{-1}(\tau)])=\hl([p_0^{-1}(\tau)])\quad (\tau\in B).
\end{align}
Finally, it follows from (\ref{Vienna5}), (\ref{Vienna8}) and (\ref{Vienna9}) that $\hl\left([Z_{<1}^*]+[Z_0]\right)=0$ in $\mathscr M_{k,\loc}^{\hat{\mu}}$. This together with Proposition \ref{-1-time} proves (\ref{neww}).


\section{Proof of Theorem \ref{TSformal}}\label{CC}
It is Theorem \ref{comp} that completely interprets the role of the morphisms $\hl_m$ and $\hl$ in understanding the motivic Milnor fiber of a formal function from the non-archimedean geometry point of view. Motivated by this, to prove Theorem \ref{TSformal}, also as the proof of the regular version (Section \ref{SSregular}), we work on analytic Milnor fibers (in the sense of \cite{NS}) considered as definable sets in the theory $\ACVF_{k((t))}(0,0)$.

\subsection{Using arguments in Section \ref{SSregular}}\label{subsec6.1}
Let $\Z$ be the analytic Minor fiber $\overline{](0,0)[}$ of $\mathfrak f\oplus \mathfrak g$ at the origin $(0,0)$ of $\A_k^{d_1}\times\mathbb A_k^{d_2}$, namely, 
$$\Z=\left\{(x,y)\in \m^{d_1+d_2}\mid f(x)+g(y)=t\right\}.$$
(To indicate precisely the origin of $\mathbb A_k^{d_i}$, if necessary, we write $0_{d_i}$ instead of $0$.) It induces immediately from Theorem \ref{comp} that $\hl([\Z])=\loc\left(\mathscr S_{\mathfrak f\oplus\mathfrak g, (0,0)}\right)$ in $\mathscr M_{ k,\loc}^{\hat{\mu}}$. Write $\Z$ as a disjoint union of definable subsets $\X$, $\Y$ and $\Z^*$ respectively defined by $\val f(x)<\val g(y)$, $\val f(x)>\val g(y)$ and $\val f(x)=\val g(y)$. Also by Theorem \ref{comp}, we have $\hl([\X])=\loc\left(\mathscr S_{\mathfrak f,0}\right)$ and $\hl([\Y])=\loc\left(\mathscr S_{\mathfrak g,0}\right)$ in $\mathscr M_{k,\loc}^{\hat{\mu}}$.

To continue, we modify slightly $\Z^*$ into $Z^{\fl}$, where
$$Z^{\fl}=\left\{(x,y)\in \m^{d_1+d_2}\mid \rv(f(x)+g(y))=\rv(t), \val f(x)=\val g(y)\right\},$$
and note that $\hl([Z^{\fl}])=\hl([0]_1^{d_1+d_2}\cdot[\Z^*])=\hl([\Z^*])$ in $\mathscr M_{k,\loc}^{\hat{\mu}}$ since $\hl([0]_1)=1$. Now, decompose $Z^{\fl}$ into a disjoint union of $Z_1^{\fl}=\{(x,y)\in Z^{\fl} \mid \val f(x)=1\}$ and $Z_{<1}^{\fl}=\{(x,y)\in Z^{\fl} \mid 0<\val f(x)<1\}$. Similarly as in Section \ref{SSregular}, we use the definable set 
$$Z_0^{\fl}:=\left\{(x,y)\in \m^{d_1+d_2}\mid \val(f(x)+g(y))>1, -\rv f(x)=\rv g(y)=\rv(t)\right\}$$ 
and present $[Z^{\fl}]$ as the sum $([Z_1^{\fl}]-[Z_0^{\fl}])+([Z_{<1}^{\fl}]+[Z_0^{\fl}])$ in $K(\vol\VF^{\bdd})$. As in Subsection \ref{subsec5.3}, we also obtain $\hl([Z_{<1}^{\fl}]+[Z_0^{\fl}])=0$. In the sequel, we shall prove that $\hl([Z_1^{\fl}]-[Z_0^{\fl}])=-\loc\left(\mathscr S_{\mathfrak f,0}* \mathscr S_{\mathfrak g,0}\right)$ and the proof of Theorem \ref{TSformal} is completed.

\subsection{Using motivic integral via Subsections \ref{subsec-LS}, \ref{HLconst}, \ref{action}, \ref{subsec-descriptions}, Section \ref{SSregular}} 
In this subsection, we prove the following
\begin{proposition}\label{DTCC}
With the previous notation, we have 
$$\hl([Z_1^{\fl}]-[Z_0^{\fl}])=-\loc\left(\mathscr S_{\mathfrak f,0}* \mathscr S_{\mathfrak g,0}\right).$$
\end{proposition}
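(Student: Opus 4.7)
The plan is to mirror the proof of Proposition \ref{-1-time}, using Theorem \ref{comp} in place of Proposition \ref{virignia}(ii) to convert the level-$m$ data into the volume Poincar\'e series of $\mathfrak f$ and $\mathfrak g$, whose $T\to\infty$ limits produce $\mathscr S_{\mathfrak f,0}$ and $\mathscr S_{\mathfrak g,0}$.

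The first step is the formal analog of Lemma \ref{ICTP}. Since $Z_1^{\fl}$ and $Z_0^{\fl}$ are bounded closed definable subsets of $\VF^{d_1+d_2}$, they are $\gamma$-invariant for $\gamma$ large enough (take $\gamma=2$, cf.\ \cite[Lemma 3.1.1]{HL}). Unwinding $Z_i^{\fl}[m;2]$ and substituting $t^{1/m}\leadsto t$ as in the proof of Lemma \ref{ICTP}, one obtains, up to an $\mathbb A_k^{(m-1)(d_1+d_2)}$-factor, a constructible subset of $(tk[t]/t^{m+1})^{d_1+d_2}$ defined by $\ord f(\varphi)=\ord g(\psi)=m$ and $f(\varphi)+g(\psi)\equiv t^m\mod t^{m+1}$ (respectively by $\ord(f(\varphi)+g(\psi))>m$ and $-f(\varphi)\equiv g(\psi)\equiv t^m\mod t^{m+1}$). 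Although $f,g$ are formal power series, only finitely many of their coefficients influence the truncations $\mod t^{m+1}$, so these are honest $k$-varieties endowed with natural good $\mu_m$-actions. Setting $\mathfrak X_{0,m}(f):=\{\varphi\in(tk[t]/t^{m+1})^{d_1}\mid f(\varphi)\equiv t^m\mod t^{m+1}\}$ and analogously $\mathfrak X_{0,m}(g)$, the rescaling $(\varphi,\psi;a,b)\mapsto(\varphi(at),\psi(bt))$ furnishes the same $\mu_m\times\mu_m$-equivariant identifications as in Lemma \ref{ICTP}, and Proposition \ref{virignia}(i) then yields
\[
\widetilde\hl_m([Z_1^{\fl}]-[Z_0^{\fl}])=-\loc\!\left([\mathfrak X_{0,m}(f)]*[\mathfrak X_{0,m}(g)]\,\Lbb^{-m(d_1+d_2)}\right).
\]

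Next, multiplying by $T^m$, summing over $m$, and applying $\lim_{T\to\infty}$ via Proposition \ref{8May} together with the Hadamard-product identity for $*$ (used verbatim as at the end of the proof of Proposition \ref{-1-time}), one gets
\[
\hl([Z_1^{\fl}]-[Z_0^{\fl}])=-\loc(\mathfrak M_f*\mathfrak M_g),\qquad \mathfrak M_f:=-\lim_{T\to\infty}\sum_{m\geq 1}[\mathfrak X_{0,m}(f)]\Lbb^{-md_1}T^m,
\]
and analogously $\mathfrak M_g$. It remains to identify $\mathfrak M_f$ with $\mathscr S_{\mathfrak f,0}$. Unwinding the definitions of Subsection \ref{subsec-LS}, the class $[\mathfrak X_{0,m}(f)]\Lbb^{-md_1}$ is the motivic measure of the stable cylinder, inside a N\'eron smoothening of $\widehat{\mathfrak X}_0$, cut out by $f(\varphi)\equiv t\mod t^{(m+1)/m}$; summing produces $\Lbb^{d_1}\int_{]0_{d_1}[_m}|\omega(m)|$ for the tautological gauge form $\omega$, and then the Nicaise-type proposition in Subsection \ref{subsec-LS} giving $\mathscr S_{\mathfrak f,0}=-\Lbb^{d_1}\lim_{T\to\infty}S(]0_{d_1}[,\omega;T)$ closes the argument.

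The main obstacle is precisely this last identification: one must carefully match the ad hoc jet space $\mathfrak X_{0,m}(f)$ (built directly from $f$ as a power series) with the Greenberg scheme of a N\'eron smoothening of $\widehat{\mathfrak X}_0$ that underpins Nicaise's integral, and verify that the $\mu_m$-action on the former agrees with the one induced via $\Theta$ in Subsection \ref{action}. Once this is in place, the entire argument becomes a mechanical transcription of Section \ref{SSregular}, with Theorem \ref{comp} playing the role of Proposition \ref{virignia}(ii).
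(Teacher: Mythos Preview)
Your proposal is essentially correct but follows a different route from the paper's. The paper does \emph{not} work with the trivial volume form: it equips $Z_1^{\fl}$ and $Z_0^{\fl}$ with the form $\val_{\omega_1}\oplus\val_{\omega_2}$ coming from gauge forms $\omega_i$ on $]0_{d_i}[$, then stratifies by the level sets $\val_{\omega_i}=e_i$ (Lemma~\ref{lem6.1}). Each stratum is handled through the N\'eron smoothenings $\mathfrak Z_i$, which are stft and hence governed by \emph{convergent} series $\widetilde f,\widetilde g$; this is where the jet computation of Lemma~\ref{ICTP} is re-run. Summing over $e_1,e_2$ reconstitutes $\hl_m$ with the nontrivial form (Lemma~\ref{lem6.2}) and lands directly on the volume Poincar\'e integrals, so that Theorem~\ref{comp} and Proposition~\ref{8May} finish the proof with no separate identification step. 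Your approach, by contrast, takes $\varepsilon=0$, runs Lemma~\ref{ICTP} once on all of $Z_1^{\fl}$, and defers the contact with $\mathscr S_{\mathfrak f,0}$ to the very end. This is conceptually leaner (no gauge forms, no stratification), at the cost of making the final identification $\mathfrak M_f=\loc(\mathscr S_{\mathfrak f,0})$ the load-bearing step.

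On that last step, your sketch via ``the tautological gauge form'' and matching $\mathfrak X_{0,m}(f)$ with a cylinder in a N\'eron smoothening is more convoluted than necessary and is exactly where your ``main obstacle'' lives. There is a much shorter closure: the very computation you already did shows
\[
\widetilde\hl_m\bigl([\overline{]0_{d_1}[}_{\rv}]\bigr)=\loc\bigl([\mathfrak X_{0,m}(f)]\,\Lbb^{-md_1}\bigr),
\]
so by Proposition~\ref{8May} (with $\varepsilon=0$) one has $\mathfrak M_f=\hl\bigl([\overline{]0_{d_1}[}_{\rv}]\bigr)$, and Theorem~\ref{comp}(vi) gives $\hl\bigl([\overline{]0_{d_1}[}_{\rv}]\bigr)=\loc(\mathscr S_{\mathfrak f,0})$ directly. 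This bypasses any explicit comparison of your ad hoc jet space with the Greenberg scheme of a smoothening, and also avoids having to name a gauge form; the gauge forms are hidden inside the proof of Theorem~\ref{comp} and you may treat (vi) as a black box. With this replacement the argument is complete.
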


Let $\mathfrak Z_1$ (resp. $\mathfrak Z_2$) be a N\'eron smoothening for the formal completion of $\mathfrak X$ at $0_{d_1}$ (resp. for the formal completion of $\mathfrak Y$ at $0_{d_2}$) with $\mathfrak Z_1$ and $\mathfrak Z_2$ smooth, topologically of finite type over $k[[t]]$. For any integer $m\geq 1$, let $\Gr(\mathfrak Z(m))_{\rv}$ be the space defined as $\Gr(\mathfrak Z(m))$ but $\mathfrak f(x)=t$ replaced by $\mathfrak f(x)\equiv t\mod t^{(m+1)/m}$, where $\mathfrak f$ is the structural morphism of $\mathfrak Z$.  For $i\in\{1, 2\}$, let $\omega_i$ be a bounded gauge form on $]0_{d_i}[$ (remark that $]0_{d_1}[=\mathfrak Z_{1,\eta}$ and $]0_{d_2}[=\mathfrak Z_{2,\eta}$), and, for any integer $e_i$, set 
\begin{align*}
\Phi(\mathfrak Z_i(m),\omega_i(m),e_i):&=\mu(\{\varphi\in\Gr(\mathfrak Z_i(m))_{\rv}\mid \ord_{\mathfrak Z_i(m)}(\omega_i(m))(\varphi)=e_i\}),
\end{align*}
which is an element of $\mathscr M_k^{\hat{\mu}}$, by the $\mu_m$-action $a\varphi(t):=\varphi(at)$. By definition,
$$\int_{]0_{d_i}[_{m,\rv}}|\omega_i(m)|=\sum_{e_i\in\mathbb Z}\Phi(\mathfrak Z_i(m),\omega_i(m),e_i)\Lbb^{-e_i},$$
for $i\in\{1, 2\}$, where the sum runs over a finite set as $\omega_i$ is a gauge form (see \cite{LS}). One thus deduces that 
\begin{equation}\label{QTLD}
\begin{aligned}
&\left(\int_{]0_{d_1}[_{m,\rv}}|\omega_1(m)|\right)*\left(\int_{]0_{d_2}[_{m,\rv}}|\omega_2(m)|\right)\\
&\qquad=\sum_{e_1,e_2\in\mathbb Z}\Phi(\mathfrak Z_1(m),\omega_1(m),e_1)*\Phi(\mathfrak Z_2(m),\omega_2(m),e_2)\Lbb^{-(e_1+e_2)}.
\end{aligned}
\end{equation}

For $e_1$, $e_2$ in $\Gamma$, let $Z_{1,e_1,e_2}^{\fl}$ (resp. $Z_{0,e_1,e_2}^{\fl}$) be the subset of $Z_1^{\fl}$ (resp. $Z_0^{\fl}$) such that $\val_{\omega_1}(x)=e_1$ and $\val_{\omega_2}(y)=e_2$. For $e$ in $\Gamma$, set $Z_{1,e}^{\fl}:=\bigcup_{e_1+e_2=e}Z_{1,e_1,e_2}^{\fl}$ and $Z_{0,e}^{\fl}:=\bigcup_{e_1+e_2=e}Z_{0,e_1,e_2}^{\fl}$.


\begin{lemma}\label{lem6.1}
For any integer $m\geq 1$, for any $e_1$, $e_2$ in $\Gamma$ with $me_1, me_2\in \mathbb Z$, 
\begin{align*}
&\widetilde\hl_m([Z_{1,e_1,e_2}^{\fl}]-[Z_{0,e1,e_2}^{\fl}])\\
&\quad=-\loc\left(\Lbb^{d_1+d_2}\Phi(\mathfrak Z_1(m),\omega_1(m),me_1)*\Phi(\mathfrak Z_2(m),\omega_2(m),me_2)\right).
\end{align*}
\end{lemma}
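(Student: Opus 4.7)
The plan is to adapt the proof of Lemma~\ref{ICTP} and the first part of Proposition~\ref{-1-time} to the formal setting, replacing jet spaces of $\mathbb A^{d_i}$ by level sets of the order function on Greenberg schemes of the N\'eron smoothenings $\mathfrak Z_i(m)$, and carefully tracking the $\val_{\omega_i}$-filtration throughout.

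First I would reduce to a computation at a fixed truncation level. By \cite[Lemma~3.1.1]{HL}, fix $\gamma' \in (1/m)\mathbb Z$ large enough that both bounded closed definable sets $Z_{1,e_1,e_2}^{\fl}$ and $Z_{0,e_1,e_2}^{\fl}$ are $\gamma'$-invariant; Proposition~\ref{virignia}(i) then gives
\begin{equation*}
\widetilde\hl_m([Z_{\bullet,e_1,e_2}^{\fl}]) \;=\; \loc\Bigl([Z_{\bullet,e_1,e_2}^{\fl}[m;\gamma']]\,\Lbb^{-m(d_1+d_2)\gamma' + (d_1+d_2)}\Bigr), \quad \bullet \in \{0,1\},
\end{equation*}
so that the $\Lbb^{d_1+d_2}$ prefactor in the claimed identity arises from the $+(d_1+d_2)$ in this exponent.

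Next, as in the proof of Theorem~\ref{comp}(i), interpret these truncations via the N\'eron smoothenings. Since $k[[t]]$ is henselian, the $k((t^{1/m}))^{\alg}$-points of $Z^{\fl}$ correspond to pairs of arcs $(\varphi_1,\varphi_2)\in\Gr(\mathfrak Z_1(m))_{\rv}\times\Gr(\mathfrak Z_2(m))_{\rv}$, and the condition $\val_{\omega_i}=e_i$ translates into $\ord_{\mathfrak Z_i(m)}(\omega_i(m))(\varphi_i)=me_i$. After the rescaling $t^{1/m}\mapsto t$ and extracting the factor $\mathbb A_k^{(m\gamma'-1)(d_1+d_2)}$ coming from relaxing the truncation level, I expect to produce, via rescaling maps analogous to the $\xi,\eta$ of Lemma~\ref{ICTP}, canonical isomorphisms
\begin{align*}
Z_{1,e_1,e_2}^{\fl}[m;\gamma'] \;&\cong\; A_{1,me_1}\times A_{2,me_2}\times^{\mu_m\times\mu_m} F_1^m \;\times\; \mathbb A_k^{(m\gamma'-1)(d_1+d_2)},\\
Z_{0,e_1,e_2}^{\fl}[m;\gamma'] \;&\cong\; A_{1,me_1}\times A_{2,me_2}\times^{\mu_m\times\mu_m} F_0^m \;\times\; \mathbb A_k^{(m\gamma'-1)(d_1+d_2)},
\end{align*}
where $A_{i,me_i}$ denotes the truncation at level $m\gamma'$ of the stable cylinder $\{\ord_{\mathfrak Z_i(m)}(\omega_i(m))=me_i\}$, equipped with its canonical $\mu_m$-action from Subsection~\ref{3.2.2}, and whose class normalized by $\Lbb^{-md_i\gamma'}$ realizes $\Phi(\mathfrak Z_i(m),\omega_i(m),me_i)$ by (\ref{measure}). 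Subtracting the two cases and invoking the definition of the convolution product from Subsection~\ref{TSconv} then yields the claimed identity.

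The main obstacle will be verifying the angular-rescaling isomorphism in the N\'eron-smoothening setting. Unlike Lemma~\ref{ICTP}, where coordinates on $\mathbb A^{d_i}$ are globally Cartesian and the rescaling $(\varphi(t),\psi(t);a,b)\mapsto(\varphi(at),\psi(bt))$ is manifestly well defined, here one works on Greenberg schemes whose structure is only \'etale-locally product-like. One has to confirm that the analogous map descends to the $\mu_m\times\mu_m$-quotient, is equivariant for the canonical $\mu_m$-action, and preserves both the $\ord_{\mathfrak Z_i(m)}(\omega_i(m))$-level sets and the motivic measure $\mu$. Once this is settled, the remainder of the argument is a direct transcription of the computations in Section~\ref{SSregular}.
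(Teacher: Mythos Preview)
Your outline matches the paper's strategy: truncate via Proposition~\ref{virignia}(i), pass to the N\'eron smoothenings, and reduce to the computation of Lemma~\ref{ICTP} and Proposition~\ref{-1-time} with the extra conditions $\ord_{\mathfrak Z_i(m)}(\omega_i(m))=me_i$ carried along as $\mu_m$-invariant constraints. The identification of the $\Lbb^{d_1+d_2}$ factor and the use of the convolution definition are also as in the paper.

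Where your route diverges is in how the ``main obstacle'' is dispatched. The paper does \emph{not} attempt to verify abstractly that the rescaling maps $\xi,\eta$ descend and are measure-preserving on Greenberg schemes that are only \'etale-locally trivial. Instead it exploits the standing choice from Subsection~\ref{3.2.2} that the N\'eron smoothenings $\mathfrak Z_i$ are topologically of finite type over $k[[t]]$. This gives, for each $\mathfrak Z_i$, a convergent power series $\widetilde f\in k\{x\}$ (resp.\ $\widetilde g\in k\{y\}$) vanishing at the origin and an explicit description
\[
\Gr_\ell(\mathfrak Z_1)(k)\;\cong\;\bigl\{\varphi\in (tk[t]/t^{\ell+1})^{d_1}\;\bigm|\;\widetilde f(\varphi)=t\bigr\},
\]
and similarly for $\mathfrak Z_2$. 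With this in hand, $\Lbb^{d_i}\Phi(\mathfrak Z_i(m),\omega_i(m),me_i)$ is expressed as a class of truncated arcs in an ambient affine space, exactly as in the regular case but with $\widetilde f,\widetilde g$ in place of the polynomials $f,g$ and with the additional cylinder condition $\ord_{\mathfrak Z_i}(\omega_i)=me_i$. The angular-rescaling isomorphisms of Lemma~\ref{ICTP} then apply verbatim in these global affine coordinates, so no separate descent or \'etale-local argument is required. Your more intrinsic approach could presumably be made to work, but the paper's shortcut via the stft hypothesis is what makes the proof a ``direct transcription'' rather than a new verification.
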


\begin{proof}
Since $\mathfrak Z_1$ is topologically of finite type, there exist a convergent power series $\widetilde f$ in $k\{x\}$ vanishing at $0$ (hence a $k[[t]]$-scheme $\mathscr X=\Spec\Big(k[[t]][x]/(\widetilde f(x)-t)\Big)$) such that  
\begin{align*}
\Gr_{\ell}(\mathfrak Z_1)(k)&=\left\{\varphi\in \left(\mathscr X\otimes_{k[[t]]}(k[t]/t^{\ell+1})\right)(k[t]/t^{\ell+1})\mid \val(\varphi)>0\right\}\\
&\cong \left\{\varphi\in \left(tk[t]/t^{\ell+1}\right)^{d_1}\mid \widetilde f(\varphi)=t\right\}
\end{align*}
for $\ell$ in $\mathbb N$. Similarly, $\Gr_{\ell}(\mathfrak Z_2)(k)\cong \{\varphi\in \left(tk[t]/t^{\ell+1}\right)^{d_2}\mid \widetilde g(\varphi)=t\}$ for some $\widetilde g$ in $k\{y\}$ with $\widetilde g(0)=0$. Thus, $\Lbb^{d_1}\Phi(\mathfrak Z_1(m),\omega_1(m),me_1)$ equals $\Lbb^{-\ell md_1}$ times
\begin{align*}
\left[\left\{\varphi\in \left(tk[t]/t^{\ell m+1}\right)^{d_1}\mid \widetilde f(\varphi)\equiv t^m\mod t^{m+1}, \ord_{\mathfrak Z_1}(\omega_1)(\varphi)=me_1\right\}\right],
\end{align*}
and $\Lbb^{d_2}\Phi(\mathfrak Z_2(m),\omega_2(m),me_2)$ equals $\Lbb^{-\ell md_2}$ times
\begin{align*}
\left[\left\{\psi\in \left(tk[t]/t^{\ell m+1}\right)^{d_2}\mid \widetilde g(\psi)\equiv t^m\mod t^{m+1}, \ord_{\mathfrak Z_2}(\omega_2)(\psi)=me_2\right\}\right],
\end{align*}
for $\ell\in\mathbb N$ large enough. At this time, we may use the arguments in the proof of Lemma \ref{ICTP} and Proposition \ref{-1-time}, hence conclusion.
\end{proof}

\begin{lemma}\label{lem6.2}
For any integer $m\geq 1$,
\begin{align*}
-\hl_m([(\overline{]0_{d_1}[}_{\rv},&\val_{\omega_1})])*\hl_m([(\overline{]0_{d_2}[}_{\rv},\val_{\omega_2})])\\
&=\hl_m([(Z_1^{\fl},\val_{\omega_1}\oplus\val_{\omega_2})]-[(Z_0^{\fl},\val_{\omega_1}\oplus\val_{\omega_2})]),
\end{align*}
where, by definition, $\val_{\omega_1}\oplus\val_{\omega_2}(x,y)=\val_{\omega_1}(x)+\val_{\omega_2}(y)$. 
\end{lemma}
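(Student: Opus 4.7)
The plan is to expand both sides of the claimed identity via Lemma \ref{simp} and match them term-by-term, using Lemma \ref{lem6.1} together with the bilinearity of the convolution product $*$ on $\mathscr M_{k,\loc}^{\hat\mu}$.

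For the left-hand side, I would first apply Theorem \ref{comp}(v) to rewrite $\hl_m([(\overline{]0_{d_i}[}_{\rv}, \val_{\omega_i})])$ as $\loc\bigl(\Lbb^{d_i} \int_{]0_{d_i}[_{m,\rv}} |\omega_i(m)|\bigr)$ for $i \in \{1, 2\}$, then expand each integral as a finite sum of terms $\Phi(\mathfrak Z_i(m), \omega_i(m), e_i)\,\Lbb^{-e_i}$ indexed by $e_i \in \mathbb Z$ (with finite support because $\omega_i$ is a gauge form). Combining this with identity (\ref{QTLD}) and bilinearity of $*$, one would obtain
\begin{align*}
&\hl_m([(\overline{]0_{d_1}[}_{\rv}, \val_{\omega_1})]) * \hl_m([(\overline{]0_{d_2}[}_{\rv}, \val_{\omega_2})]) \\
&\qquad = \loc\Bigl(\Lbb^{d_1+d_2} \sum_{e_1, e_2 \in \mathbb Z} \Phi(\mathfrak Z_1(m), \omega_1(m), e_1) * \Phi(\mathfrak Z_2(m), \omega_2(m), e_2)\,\Lbb^{-(e_1+e_2)}\Bigr).
\end{align*}

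For the right-hand side, I would use Lemma \ref{simp} to write, for $\star \in \{0, 1\}$,
\begin{align*}
\hl_m([(Z_\star^{\fl}, \val_{\omega_1} \oplus \val_{\omega_2})]) = \sum_{e \in \mathbb Z} \widetilde{\hl}_m([Z_{\star, e/m}^{\fl}])\,\Lbb^{-e},
\end{align*}
and decompose the level set $Z_{\star, e/m}^{\fl}$ as the disjoint union $\bigsqcup_{e_1 + e_2 = e/m} Z_{\star, e_1, e_2}^{\fl}$ over rational pairs $(e_1, e_2)$. As in the proof of Theorem \ref{comp}(i), $\widetilde{\hl}_m$ vanishes on $Z_{\star, e_1, e_2}^{\fl}$ unless $me_1, me_2 \in \mathbb Z$, so only pairs $e_i = k_i/m$ with $k_i \in \mathbb Z$ and $k_1 + k_2 = e$ contribute. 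Lemma \ref{lem6.1} then yields
\begin{align*}
&\widetilde{\hl}_m([Z_{1, k_1/m, k_2/m}^{\fl}] - [Z_{0, k_1/m, k_2/m}^{\fl}]) \\
&\qquad = -\loc\bigl(\Lbb^{d_1+d_2} \Phi(\mathfrak Z_1(m), \omega_1(m), k_1) * \Phi(\mathfrak Z_2(m), \omega_2(m), k_2)\bigr);
\end{align*}
summing against $\Lbb^{-e}$ and reindexing by $(k_1, k_2)$ produces precisely the negative of the left-hand side expansion displayed above.

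The main subtlety will be the bookkeeping around the level-set decomposition: one must confirm at the level of classes in $K(\mu_\Gamma \VF^{\bdd})$ that $\widetilde{\hl}_m$ is additive on the finite partition $Z_{\star, e/m}^{\fl} = \bigsqcup Z_{\star, e_1, e_2}^{\fl}$ and picks out only the values $e_i \in (1/m)\mathbb Z$. This is essentially the level-wise identification already carried out in the proof of Theorem \ref{comp}(i), so once it is in place the remainder of the argument reduces to a formal matching of coefficients.
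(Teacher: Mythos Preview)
Your proposal is correct and follows essentially the same route as the paper: one expands the left-hand side via Theorem~\ref{comp}(v) and (\ref{QTLD}), expands the right-hand side via Lemma~\ref{simp}, and matches the two using Lemma~\ref{lem6.1}. The paper's proof is terser and does not isolate the bookkeeping subtlety you flag about the level-set decomposition, but the logical skeleton is identical.
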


\begin{proof}
Applying (\ref{QTLD}) and Lemmas \ref{lem6.1}, \ref{simp}, we get  
\begin{align*}
-&\hl_m([(\overline{]0_{d_1}[}_{\rv},\val_{\omega_1})])*\hl_m([(\overline{]0_{d_2}[}_{\rv},\val_{\omega_2})])\\
&=-\loc\left(\Lbb^{d_1}\int_{]0_{d_1}[_{m,\rv}}|\omega_1(m)|\right)*\loc\left(\Lbb^{d_2}\int_{]0_{d_2}[_{m,\rv}}|\omega_2(m)|\right)\\
&=-\loc\left(\Lbb^{d_1+d_2}\sum_{e_1,e_2\in\mathbb Z}\Phi(\mathfrak Z_1(m),\omega_1(m),e_1)*\Phi(\mathfrak Z_2(m),\omega_2(m),e_2)\Lbb^{-(e_1+e_2)}\right)\\
&=\sum_{e\in\mathbb Z}\widetilde\hl_m([Z_{1,e/m}^{\fl}]-[Z_{0,e/m}^{\fl}])\Lbb^{-e}\\
&=\hl_m([(Z_1^{\fl},\val_{\omega_1}\oplus\val_{\omega_2})]-[(Z_0^{\fl},\val_{\omega_1}\oplus\val_{\omega_2})]).
\end{align*}
The lemma is proven.
\end{proof}

\begin{proof}[Proof of Proposition \ref{DTCC} and Theorem \ref{TSformal}]
Thanks to Lemma \ref{lem6.2}, Proposition \ref{8May} and Theorem \ref{comp}, we have $\hl([Z_1^{\fl}]-[Z_0^{\fl}])=-\loc\left(\mathscr S_{\mathfrak f,0}* \mathscr S_{\mathfrak g,0}\right)$ as desired. The proof of Theorem \ref{TSformal} is deduced from Proposition \ref{DTCC} and Subsection \ref{subsec6.1}.
\end{proof}


\end{document}